\def\cs{\mathop{\#}}
\def\calf{\mathcal{F}}
\def\calk{\mathcal{K}}
\def\calp{\mathcal{P}}
\def\call{\mathcal{L}}
\def\calb{\mathcal {B}}
\def\calz{\mathcal {Z}}
\def\cfk{{\textrm{CFK}}}
\newcommand{\spinc}{\ifmmode{{\mathfrak s}}\else{${\mathfrak s}$\ }\fi}
\newcommand{\spinct}{\ifmmode{{\mathfrak t}}\else{${\mathfrak t}$\ }\fi}
\newcommand{\spincw}{\ifmmode{{\mathfrak w}}\else{${\mathfrak w}$\ }\fi}
\def\F{\mathbb F}
\def\La{\Lambda}
\def\alg{{\rm alg}}
\def\alex{{\rm Alex}}
\newcommand{\fig}[2] { \includegraphics[scale=#1]{#2} }
\def\rmc{\mathrm{C}}
\def\rma{\mathrm{A}}
\def\cfki{\cfk^\infty(K)}
\newtheorem{theorem}{Theorem}[section]
\newtheorem*{theorem*}{Theorem}
\newtheorem{corollary}[theorem]{Corollary}
\newtheorem{proposition}[theorem]{Proposition}
\theoremstyle{definition}
\newtheorem{definition}[theorem]{Definition}
\newtheorem{example}[theorem]{Example}
\theoremstyle{remark}
\numberwithin{equation}{section}
\def\U{\Upsilon}
\def\Utwoprime{\U_{K,t}^{2}\hskip -1.5ex\raisebox{.5ex}{$^\prime$}\hskip 0.9ex}
\begin{document}


\title[Secondary Upsilon Invariants]{Secondary Upsilon Invariants of Knots}
\author{Se-Goo Kim}
\author{Charles Livingston}
\address{Se-Goo Kim: Department of Mathematics and Research Institute for Basic Sciences, Kyung Hee University, Seoul 02447, Korea }
\email{sgkim@khu.ac.kr}
\address{Charles Livingston: Department of Mathematics, Indiana University, Bloomington, IN 47405 }
\email{livingst@indiana.edu}

\thanks{The first author was supported by the Basic Science Research Program through the National Research Foundation of Korea (NRF) funded by the Ministry of Education (NRF-2015R1D1A1A01058384). The second author was supported by a Simons Foundation grant and by NSF-DMS-1505586.}

\maketitle


\begin{abstract}  The knot invariant Upsilon, defined by Ozsv\'ath, Stipsicz, and Szab\'o,  induces a homomorphism from the smooth knot concordance group to the group of piecewise linear functions on the interval [0,2].  Here we define a set of related secondary invariants, each of which assigns to a knot a piecewise linear function on [0,2].  These secondary invariants provide bounds on the genus and concordance genus of knots.  Examples of knots for which Upsilon vanishes but which are detected by these secondary invariants are presented.
\end{abstract}

\section{Introduction}
In~\cite{oss}, Ozsv\'ath, Stipsicz, and Szab\'o defined the {\it Upsilon} invariant of knots $K \subset S^3$, denoted $\U_K(t) $.   The map $K \to \U_K$ is a homomorphism from the smooth knot concordance group to the group of piecewise linear functions on the interval $[0,2]$.  This homomorphism provides bounds on the three-genus, the four-genus, and the concordance genus of knots.   Since then, this invariant has been used to effectively address a range of problems; for a sampling, see~\cite{borodzik-hedden, borodzik-livingston, feller-krcatovich, feller-park-ray, kim-wu,   livingston-cott, oss2, wang}.

For each $t \in (0,2)$, we will  define a secondary function denoted  $\U_{K,t}^2(s)$.  This is again a piecewise linear function on the interval $[0,2]$ and is a concordance invariant that provides bounds on the three-genus and concordance genus of knots.  For any piecewise linear function $f(t)$ on $[0,2]$, the jump function of the derivative, $\Delta f' (t)$, is well-defined for all $t \in (0,2)$.  If  $\Delta \U'_K(t) > 0 $, then $\U^2_{K,t}(s) < 0 $ for all $s$.
The most interesting case is when  $\Delta \U'_K(t)= 0 $ for all $t$ (that is, when $\U_K(t)$ is identically 0)  but  $\U^2_{K,t}(s) $ is a nontrivial function for some values of $t$.  

Jen Hom~\cite{hom-epsilon-upsilon} constructed a knot $K$ with vanishing $\U$--invariant  that can be shown not to be slice using the  $\epsilon$--invariant defined in~\cite{hom1}.  We will show that $K$ can also be quickly  shown to be nontrivial using $\U^2$. As an added feature,  a  corollary of the genus bounds imposed by $\U^2$ is that $g_c(nK) \ge 4n -2$.
In particular, this yields  example for which  $g_c$ can be arbitrarily large but have  vanishing $\U$.

As a final set of examples,  we  describe an infinite set of complexes for which both $\U$ and $\epsilon$ vanish, but which  can be shown to be independent using $\U^2$; whether these complexes arise from actual knots has not been determined.

\section{Knot complexes, $\cfki $}
To each knot $K \subset S^3$, there is an associated bifiltered graded chain complex $\cfki$ which has a compatible structure as a $\F[U,U^{-1}]$--module, where $\F$ is the field with two elements.  We abbreviate $\F[U,U^{-1}]$ by $\La$.  As we   explain in this section, Figure~\ref{fig34-labeled} provides a schematic illustration of the complex $\cfki$ associated to the torus knot $K = T_{3,4}$.  

\begin{figure}[h]
\fig{.25}{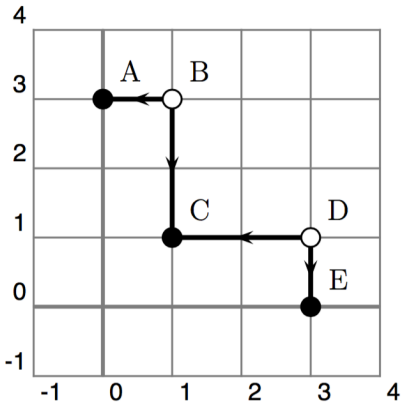}  
\caption{$\cfk^\infty(T(3,4))$ } 
\label{fig34-labeled}
\end{figure}

We now write $\rmc$ for $\cfki$. In our example, there are three black dots in the diagram; these represent  basis elements  for the set of elements of grading 0, $\rmc_0$.  The two white dots represent a  basis for $\rmc_1$. The arrows represent the boundary map: for instance, $\partial C = 0$ and $\partial B = A + C$.
The complex is bifiltered.  The coordinates $(i,j)$ of a vertex represent the {\it algebraic} and {\it Alexander} filtration levels of each of the generators.  For any element in the complex, we write   $\alg(x)$ and $\alex(x)$ for the two filtration levels.

The complex as drawn represents, in the notation of~\cite{hendricks-manolescu}, a model complex for $\rmc$ in which the $\La$--structure is hidden.  The full complex is formed by taking all integer diagonal translates of the illustrated complex; the action of $U$ shifts the vertices a distance of one down and to the left.  More formally, the diagram illustrates a finite dimensional complex $\rmc'$ and $\rmc = \rmc' \otimes \La$, graded and filtered so that the action of $U$ lowers gradings by 2 and filtration levels by 1.

All the knot complexes we consider can be described by schematic diagrams of such model complexes.  In each case, the model $\rmc'$ has the property that its homology is $H(\rmc') \cong \F$, with the generator in grading 0.  Furthermore, the minimal algebraic filtration level of representatives of the nontrivial homology class is 0; similarly for the Alexander filtration level.  Thus, if the  diagram is connected,   it determines the grading level of all vertices.

Typically, there can be more than one vertex at a given bifiltration level.  Since we cannot illustrate more than one vertex at a given lattice point $(i,j)$, we will place such vertices in the unit square with bottom left corner at $(i,j)$.      Figure~\ref{figure8} illustrates a model complex $\rmc$ for the Figure 8 knot, $4_1$.  The two vertices at bifiltration level $(0,0)$ are both cycles representing a generator of $H_0(\rmc)$.

\begin{figure}[h]
\fig{.25}{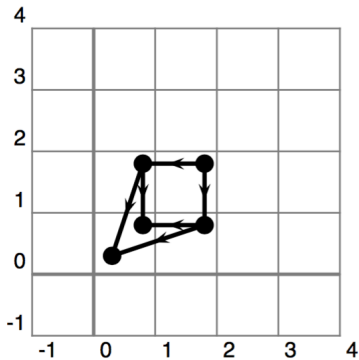}  
\caption{The figure eight knot: $\cfk^\infty(4_1)$ } 
\label{figure8}
\end{figure}

\subsection{Formal knot complexes}  The essential features of $\rmc = \cfki$ are as follows.  It is a doubly filtered graded complex with a compatible $\Lambda$--module structure.  As a $\Lambda$--module, $\rmc$ is finitely generated and free.  The action of $U \in \Lambda$   lowers filtration levels by one and gradings by two.  The homology is given by  $H(\rmc) \cong \Lambda$ with $1 \in \Lambda$ representing a generator of $H_0(\rmc)$.  Finally, there is a symmetry property:  the complex formed by switching the two filtrations is chain homotopy equivalent to $\rmc$ by a filtered graded chain homotopy equivalence. 

All of our work applies to any complex with these properties.  We will refer to them as $\calk$--complexes.  

\subsection{Concordance of complexes} The following result is proved by Hom in~\cite{hom-survey}.

\begin{proposition}\label{homstablethm} If $K$ and $J$ are concordant knots, then there are acyclic complexes $\rma_1$ and $\rma_2$ such that  $\cfk^\infty(K) \oplus \rma_1 = \cfk^\infty(J) \oplus \rma_2$.
\end{proposition}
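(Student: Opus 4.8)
The plan is to exploit the functoriality of knot Floer homology under concordances: from a concordance one obtains filtered chain maps between $\cfk^\infty(K)$ and $\cfk^\infty(J)$ in both directions, one shows that a composite is the identity up to filtered chain homotopy, and one then splits off the resulting acyclic summand by a homotopy-idempotent argument. This is essentially the argument that makes $\tau$, $\epsilon$, and the like concordance invariants, now run at the level of the whole complex.

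First I would appeal to the cobordism maps of knot Floer homology, in the form developed by Ozsv\'ath--Szab\'o and Zemke. A concordance $\mathcal C \subset S^3 \times [0,1]$ from $K$ to $J$ is a genus-zero annular cobordism; equipped with a pair of parallel dividing arcs it induces a grading-preserving $\Lambda$--module chain map $F_{\mathcal C} \colon \cfk^\infty(K) \to \cfk^\infty(J)$ which is filtered with respect to both the $\alg$ and $\alex$ filtrations. The reversed concordance $\overline{\mathcal C}$, running from $J$ to $K$, gives $F_{\overline{\mathcal C}} \colon \cfk^\infty(J) \to \cfk^\infty(K)$ with the same properties. A concordance induces an isomorphism on $H(\cfk^\infty) \cong \Lambda$: the induced map is a grading-preserving, hence degree-zero, endomorphism of $\Lambda$, and it is nonzero, so it is multiplication by $1$. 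Thus both $F_{\mathcal C}$ and $F_{\overline{\mathcal C}}$ are filtered quasi-isomorphisms.

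Second, I would check that $F_{\overline{\mathcal C}} \circ F_{\mathcal C}$ is filtered chain homotopic to $\mathrm{id}_{\cfk^\infty(K)}$: by the composition law for cobordism maps it is the map induced by the concordance $\overline{\mathcal C} \cup_J \mathcal C$ from $K$ to itself, and that concordance is concordant rel boundary to the product concordance $K \times [0,1]$, so a ``concordance of concordances'' supplies the homotopy. Now write $C = \cfk^\infty(K)$, $D = \cfk^\infty(J)$, $F = F_{\mathcal C}$, $G = F_{\overline{\mathcal C}}$. Since $G F \simeq \mathrm{id}_C$, the filtered chain map $F G \colon D \to D$ satisfies $(F G)^2 = F(G F)G \simeq F G$, so it is a filtered homotopy idempotent; in the category of finitely generated free filtered $\Lambda$--complexes, which is idempotent complete up to chain homotopy, its homotopy image splits off, producing a filtered chain homotopy equivalence $D \simeq C \oplus \rma_1$ with $\rma_1$ a finitely generated free filtered complex. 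Because $F$ is a quasi-isomorphism, $H(F)H(G) = \mathrm{id}$ on $H(D)$, so the complementary idempotent kills homology and $\rma_1$ is acyclic. Taking $\rma_2 = 0$ (or, for the symmetric statement, adjoining a contractible $\mathrm{Cone}(\mathrm{id})$ complex to both sides) yields $\cfk^\infty(K) \oplus \rma_1 = \cfk^\infty(J) \oplus \rma_2$ up to filtered graded chain homotopy equivalence.

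The main obstacle is carrying out the splitting in the last step with full control of both filtrations: over $\F$ a homotopy idempotent on a free complex splits by passing to its image, but one must verify that the splitting map and all the homotopies can be taken filtered and grading-preserving, and that the summand $\rma_1$ is again finitely generated and free -- in other words, one must prove the purely algebraic statement that a filtered quasi-isomorphism of $\calk$--complexes always induces a stable equivalence. The cobordism-map inputs are by now standard; the only geometric point requiring care is the assertion that the doubled concordance $\overline{\mathcal C} \cup_J \mathcal C$ is trivial through decorated concordances, which is needed to know the composite $F_{\overline{\mathcal C}} \circ F_{\mathcal C}$ is filtered-homotopic to the identity rather than merely a filtered quasi-isomorphism.
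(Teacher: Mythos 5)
The paper does not prove this proposition itself; it cites Hom's survey \cite{hom-survey}, where the argument runs along entirely different lines. Hom's route is algebraic: one observes that $K \cs -J$ is slice, invokes the fact that a slice knot's complex $\cfk^\infty$ is stably trivial (i.e.\ $\simeq \Lambda \oplus \rma$ with $\rma$ acyclic), and then recovers the statement for $K$ and $J$ by tensoring with $\cfk^\infty(J)$ and using that $\cfk^\infty(J) \otimes \cfk^\infty(-J)$ is also stably trivial, together with the fact that tensoring an acyclic summand with anything stays acyclic. In particular, no surface cobordism maps are used at all. Your approach, via decorated concordance maps, functoriality, and homotopy-idempotent splitting, is a genuinely different strategy; it would, if carried out, yield a somewhat more structured conclusion (an explicit filtered retraction exhibiting $\cfk^\infty(K)$ as a summand of $\cfk^\infty(J)$), but it also rests on substantially heavier machinery.

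Beyond the algebraic point you already flag (splitting a homotopy idempotent while retaining finite generation, freeness, and both filtrations), the more serious gap is the geometric assertion that $\overline{\mathcal C}\cup_J \mathcal C$ is ``concordant rel boundary to the product concordance,'' which you treat as the one geometric point ``requiring care.'' This is actually the crux, and it is not a routine fact: it is exactly the statement that every smooth concordance is invertible up to concordance of concordances, and I am not aware of this being known for arbitrary concordances. What Juh\'asz--Marengon and Zemke do establish for a general decorated concordance is that the induced map on $\cfk^\infty$ preserves gradings, filtrations, and the $\Lambda$--action, and sends the generator of $H_0$ to a generator; i.e.\ $F_{\mathcal{C}}$ is a \emph{local map}, and composing both directions gives mutually inverse isomorphisms on homology. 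That is strictly weaker than $F_{\overline{\mathcal C}}\circ F_{\mathcal C} \simeq \mathrm{id}$ at the chain level, which is precisely what your idempotent argument needs. (The stronger statement is known for \emph{ribbon} concordances, where Zemke shows the double is isotopic rel boundary to the product, but not for arbitrary ones.) Unless you either prove the doubled-concordance claim or replace the final step with a purely algebraic lemma of the form ``mutually inverse filtered local quasi-isomorphisms between $\calk$--complexes imply stable equivalence,'' the argument does not close. Hom's tensor-algebra route sidesteps this issue entirely, which is one reason it is the standard reference for this statement.
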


Here the $\rma_i$ are acyclic as graded complexes but not necessarily as filtered complexes.   We define two $\calk$--complexes to be \emph{concordant} if they are similarly stably equivalent.  A standard argument shows that the set of concordance classes forms an abelian group under tensor products.   

\section{Upsilon, $\U_K$} 

In defining $\U_K$, we follow the presentation  in~\cite{livingston1}.  For any $t \in [0,2]$ and $s\in \mathbb{R}$, we define the subcomplex $\calf_{t,s} \subset \cfki$ as follows.  Let $\mathcal{B}$ denote a bifiltered graded basis of $\cfki$.  
$$\calf_{t,s} =   \left< \{x \in \mathcal{B}\  \big|\   (t/2)Alex(x) + (1-t/2)Alg(x) \le s \}\right>.$$ 
This is independent of the choice of bifiltered graded basis.

In Figure~\ref{fig34shaded} we illustrate the complex $\cfk^\infty(T_{3,4})$ with a half-space shaded in.  This shaded region represents the subcomplex $\calf_{2/3, 1}$.  
In general, $\calf_{t,s}$ is represented by a half-space with boundary a line of slope $m = 1- 2/t$;  a half-space with boundary of slope $m$ corresponds to $\calf_{t,s}$ with $t = 2/({1-m})$.  Given the bounding line, the value of $s$ is given by $tj_0/2$, where $j_0$ is the $j$--intercept of the line.  We call the bounding line for the half-space the 
{\it support line} for $\calf_{t,s}$, which we denote $\call_{t,s}$.

\begin{figure}[h]\label{fig:doublefilter}
\fig{.2}{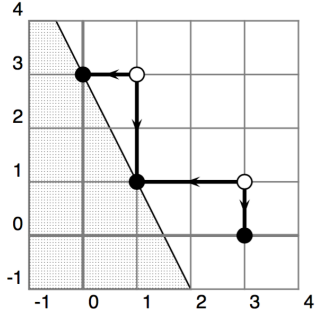}
\caption{$\cfk^\infty(T(3,4))$ doubly filtered}
\label{fig34shaded}
\end{figure}

Given this, we can now define $\U_K(t)$ for any $t \in [0,2]$.

\begin{definition} Let 
$$\gamma_K(t) =  \min\{ s \ | \ H_0(\calf_{t,s} ) \to H_0(\cfki) \cong \F \text{ is surjective}\}.$$   Define $$\U_K(t) = -2 \gamma_K(t).$$
\end{definition}

That $\U_K$ is well defined follows readily from~Proposition~\ref{homstablethm}.

\begin{corollary} If $K$ and $J$ are concordant, then $\U_K = \U_J$.
\end{corollary}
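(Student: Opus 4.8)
The plan is to show that $\gamma_K(t)$ is unchanged under two operations — bifiltered graded isomorphism and direct sum with a graded-acyclic complex — and then to quote Proposition~\ref{homstablethm}.

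First I would observe that $\gamma_K(t)$ depends only on the isomorphism class of $\cfki$ as a bifiltered graded $\La$--complex. Indeed, the subcomplex $\calf_{t,s}$ was already noted to be independent of the chosen bifiltered graded basis, and $H_0(\calf_{t,s}) \to H_0(\cfki)$ is simply the map induced by inclusion; thus any bifiltered graded isomorphism transports the entire configuration, and in particular preserves the set of $s$ for which this map is surjective. Consequently, $\gamma$ may be computed from either side of an equality of bifiltered graded $\La$--complexes.

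The substantive step is the claim that for a $\calk$--complex $\rmc$ and a complex $\rma$ that is acyclic as a graded complex, $\gamma_{\rmc \oplus \rma}(t) = \gamma_{\rmc}(t)$. To prove it, choose a bifiltered graded basis of $\rmc \oplus \rma$ which is the union of such bases of the two summands; since membership in $\calf_{t,s}$ is tested basis element by basis element, $\calf_{t,s}(\rmc \oplus \rma) = \calf_{t,s}(\rmc) \oplus \calf_{t,s}(\rma)$, whence $H_0(\calf_{t,s}(\rmc \oplus \rma)) = H_0(\calf_{t,s}(\rmc)) \oplus H_0(\calf_{t,s}(\rma))$. Because $\rma$ is graded-acyclic, $H_0(\rmc \oplus \rma) = H_0(\rmc) \cong \F$ and, under the inclusion-induced map, the $H_0(\calf_{t,s}(\rma))$ summand maps into $H_0(\rma) = 0$. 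Hence $H_0(\calf_{t,s}(\rmc \oplus \rma)) \to H_0(\rmc \oplus \rma)$ is surjective precisely when $H_0(\calf_{t,s}(\rmc)) \to H_0(\rmc)$ is, and the two minima defining $\gamma$ coincide.

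Putting this together with Proposition~\ref{homstablethm}, which provides graded-acyclic complexes $\rma_1, \rma_2$ with $\cfki \oplus \rma_1 = \cfk^\infty(J) \oplus \rma_2$ as bifiltered graded $\La$--complexes, we obtain $\gamma_K = \gamma_{K \oplus \rma_1} = \gamma_{J \oplus \rma_2} = \gamma_J$, and therefore $\U_K(t) = -2\gamma_K(t) = -2\gamma_J(t) = \U_J(t)$ for all $t \in [0,2]$. The only delicate point is the existence of a bifiltered graded basis of $\rmc \oplus \rma$ compatible with the splitting; I expect this to be the sole piece of friction, and it is handled by noting that each summand is a finitely generated free bifiltered graded $\La$--complex, so bifiltered graded bases of the summands exist and their union serves for the direct sum. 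The remainder is routine bookkeeping with inclusion-induced maps on $H_0$.
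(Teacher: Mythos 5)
Your proof is correct and is precisely the routine unpacking the paper has in mind when it says the corollary ``follows readily'' from Proposition~\ref{homstablethm} (the paper gives no further argument). The key observation — that for a graded-acyclic summand $\rma$ the map $H_0(\calf_{t,s}(\rma)) \to H_0(\rma) = 0$ is vacuously surjective, so surjectivity onto $H_0(\rmc \oplus \rma) \cong H_0(\rmc)$ is governed entirely by the $\rmc$-component — is exactly the point, and you correctly note that $\rma$ need only be acyclic in the graded (not filtered) sense for this to work.
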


\subsection{Vertices and pivot points} \label{pivots} Let $\calp$ denote the set of bifiltration levels of elements of $\cfki$.  (This depends on the homotopy representative of the complex.) Fix a value of  $t \in [0,2]$ and recall that $\gamma_K(t) = -\U_K(t)/2$.  The line $\call_{t, \gamma(t)}$ contains a nonempty subset of $\calp$ which we denote $\calp_t$.  In the illustrated example, the subset is $\calp_{2/3} = \{(0,3), (1,1)\}$. For small values of $\delta$, $\calp_{t-\delta}$ contains exactly one element of $\calp_t$ and $\calp_{t + \delta}$ also contains exactly one element of $\calp_t$.  We call these the \emph{negative} and \emph{positive pivot points} at $t$, $p_t^-$ and $p_t^+$.  The next result is proved in~\cite{livingston1}, although the statement there places the difference in absolute value.

\begin{theorem}\label{thm:upsilon_at_signularity}  The function $\U_K(t)$ has a singularity at $t$ if and only if $p_t^- \ne p_t^+$.  In general, for $t \in (0,2)$, $\Delta \U_k'(t) =\frac{2}{t}\left( i(p_t^+) - i(p_t^-)\right)$  where $i(p)$ denotes the first coordinate of a lattice point~$p$.
\end{theorem}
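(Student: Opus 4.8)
The plan is to analyze how the support line $\call_{t,\gamma_K(t)}$ and the associated pivot points vary as $t$ moves, and then to compute the one–sided derivatives of $\U_K(t)$ directly from the geometry. Recall that $\call_{t,s}$ is the line with slope $m = 1 - 2/t$ whose $j$–intercept $j_0$ satisfies $s = tj_0/2$, so a point $(i,j)$ lies on $\call_{t,s}$ precisely when $(t/2)j + (1-t/2)i = s$. Thus for a fixed lattice point $p = (i(p), j(p))$, the quantity $s_p(t) := (t/2)j(p) + (1-t/2)i(p)$ is an \emph{affine} function of $t$, with derivative $s_p'(t) = \tfrac12\bigl(j(p) - i(p)\bigr) = \tfrac12 j(p) - \tfrac12 i(p)$. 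Since $\gamma_K(t)$ is, by its very definition, the minimal $s$ for which $\calf_{t,s}$ carries $H_0$, and the relevant generator of homology has a fixed bifiltration-minimal representative, the function $\gamma_K(t)$ equals $\min_p s_p(t)$ over the finite set of lattice points $p$ supporting a cycle that generates $H_0(\cfki)$; in particular $\gamma_K$ is the minimum of finitely many affine functions, hence piecewise linear, and on any small interval to the left (resp.\ right) of $t$ it agrees with $s_{p_t^-}(t)$ (resp.\ $s_{p_t^+}(t)$). This last point is exactly the content of the definition of the negative and positive pivot points: among the lattice points on $\call_{t,\gamma_K(t)}$, the one that persists on the support line for $t - \delta$ is $p_t^-$ and the one that persists for $t+\delta$ is $p_t^+$.

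With this setup the computation is short. On the left of $t$ we have $\gamma_K(t') = s_{p_t^-}(t')$, so the left derivative of $\gamma_K$ at $t$ is $\tfrac12\bigl(j(p_t^-) - i(p_t^-)\bigr)$; on the right, $\gamma_K(t') = s_{p_t^+}(t')$, giving right derivative $\tfrac12\bigl(j(p_t^+) - i(p_t^+)\bigr)$. Hence the jump in $\gamma_K'$ at $t$ is
\[
\Delta\gamma_K'(t) \;=\; \tfrac12\Bigl(\bigl(j(p_t^+) - i(p_t^+)\bigr) - \bigl(j(p_t^-) - i(p_t^-)\bigr)\Bigr).
\]
Now I would use the fact that both $p_t^-$ and $p_t^+$ lie on the common support line $\call_{t,\gamma_K(t)}$: this forces $(t/2)j(p_t^+) + (1-t/2)i(p_t^+) = (t/2)j(p_t^-) + (1-t/2)i(p_t^-)$, which rearranges to $\tfrac t2\bigl(j(p_t^+) - j(p_t^-)\bigr) = (1 - \tfrac t2)\bigl(i(p_t^-) - i(p_t^+)\bigr)$, i.e.\ $j(p_t^+) - j(p_t^-) = \tfrac{2-t}{t}\bigl(i(p_t^-) - i(p_t^+)\bigr)$. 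Substituting this into the expression for $\Delta\gamma_K'(t)$ and simplifying collapses the $j$–terms, yielding $\Delta\gamma_K'(t) = -\tfrac1t\bigl(i(p_t^+) - i(p_t^-)\bigr)$. Finally, since $\U_K(t) = -2\gamma_K(t)$, we get $\Delta\U_K'(t) = -2\,\Delta\gamma_K'(t) = \tfrac2t\bigl(i(p_t^+) - i(p_t^-)\bigr)$, as claimed; and the function has a genuine singularity at $t$ exactly when this jump is nonzero, which by the slope formula happens precisely when $i(p_t^+) \neq i(p_t^-)$, equivalently when $p_t^- \neq p_t^+$ (two lattice points on a line of fixed nonvertical slope have equal first coordinates iff they are equal).

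The main obstacle is not the algebra above but justifying that $\gamma_K(t)$ really is governed, near $t$, by single lattice points $p_t^\pm$ in the way the pivot-point discussion asserts — in particular that the minimization defining $\gamma_K$ can be taken over a finite, $t$–independent set of lattice points, and that the homological surjectivity condition is controlled entirely by which side of the support line these points fall on (rather than by more subtle behavior of the chain complex as the filtration moves). This requires invoking the structural facts from Section~2: that $H(\rmc') \cong \F$ with the generator at bifiltration-minimal levels $0$ in each coordinate, so that a single distinguished cycle detects $H_0$, and that passing $\call_{t,s}$ across a lattice point of $\calp$ is the only way the inclusion-induced map on $H_0$ can change. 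I would also need to handle the degenerate possibility that several lattice points simultaneously leave or enter the half-space, but for the one-sided derivatives only the extreme point in the direction of motion matters, which is precisely what $p_t^-$ and $p_t^+$ are defined to be; the equality constraint from both pivots lying on $\call_{t,\gamma_K(t)}$ then does the rest. Everything else — the affineness of $s_p(t)$, the two substitutions — is routine.
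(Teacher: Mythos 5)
Your argument is correct, and the algebra all checks out: the left and right derivatives of $\gamma_K$ at $t$ are $\tfrac12(j(p_t^\pm)-i(p_t^\pm))$, the collinearity constraint $(t/2)j(p_t^+)+(1-t/2)i(p_t^+)=(t/2)j(p_t^-)+(1-t/2)i(p_t^-)$ eliminates the $j$--terms, and the $-2$ from $\U_K=-2\gamma_K$ produces $\Delta\U_K'(t)=\tfrac{2}{t}\bigl(i(p_t^+)-i(p_t^-)\bigr)$. The ``iff'' clause then follows from the support line being nonvertical for $t\in(0,2)$. Note, however, that the paper itself does \emph{not} contain a proof of this theorem; it explicitly defers to \cite{livingston1}, so there is no internal proof to compare against. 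Your argument is the expected one: track which single lattice point the support line pivots about on each side of $t$, differentiate the resulting affine functions $s_p(t)=(t/2)j(p)+(1-t/2)i(p)$, and use the collinearity constraint.

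One imprecision worth flagging, which you yourself acknowledge in your final paragraph: the statement that ``$\gamma_K(t)$ equals $\min_p s_p(t)$'' is not literally correct as written. A cycle $z$ representing the generator of $H_0$ lies in $\calf_{t,s}$ only once $s\ge\max_{x\in z}s_x(t)$, so $\gamma_K(t)$ is a minimum over cycles of a maximum over basis elements, i.e.\ $\gamma_K(t)=\min_z\max_{x\in z}s_x(t)$, not a bare minimum over lattice points. The minimax is still piecewise linear, and on a small punctured interval around $t$ it is governed by a single active vertex (the pivot) on each side --- this is exactly what the definitions of $p_t^\pm$ in Section 3.1 encode, so nothing in your derivation of the one-sided derivatives actually breaks. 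But the clean line ``$\gamma_K=\min_p s_p$'' should be replaced by the minimax description, or else one should simply invoke the pivot-point definitions directly (as you in effect do) without the intermediate mischaracterization.
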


\begin{example}\label{example34+} As an example, Figure~\ref{cfkt34} illustrates the complexes for the torus knots $T(3,4)$ and $-T(3,4)$.  Black dots are at grading level 0, white dots at grading 1, and gray dots at grading $-1$.  (The entire complexes include all the $U^k$ translates of these diagrams.)

\begin{figure}[h]
\fig{.25}{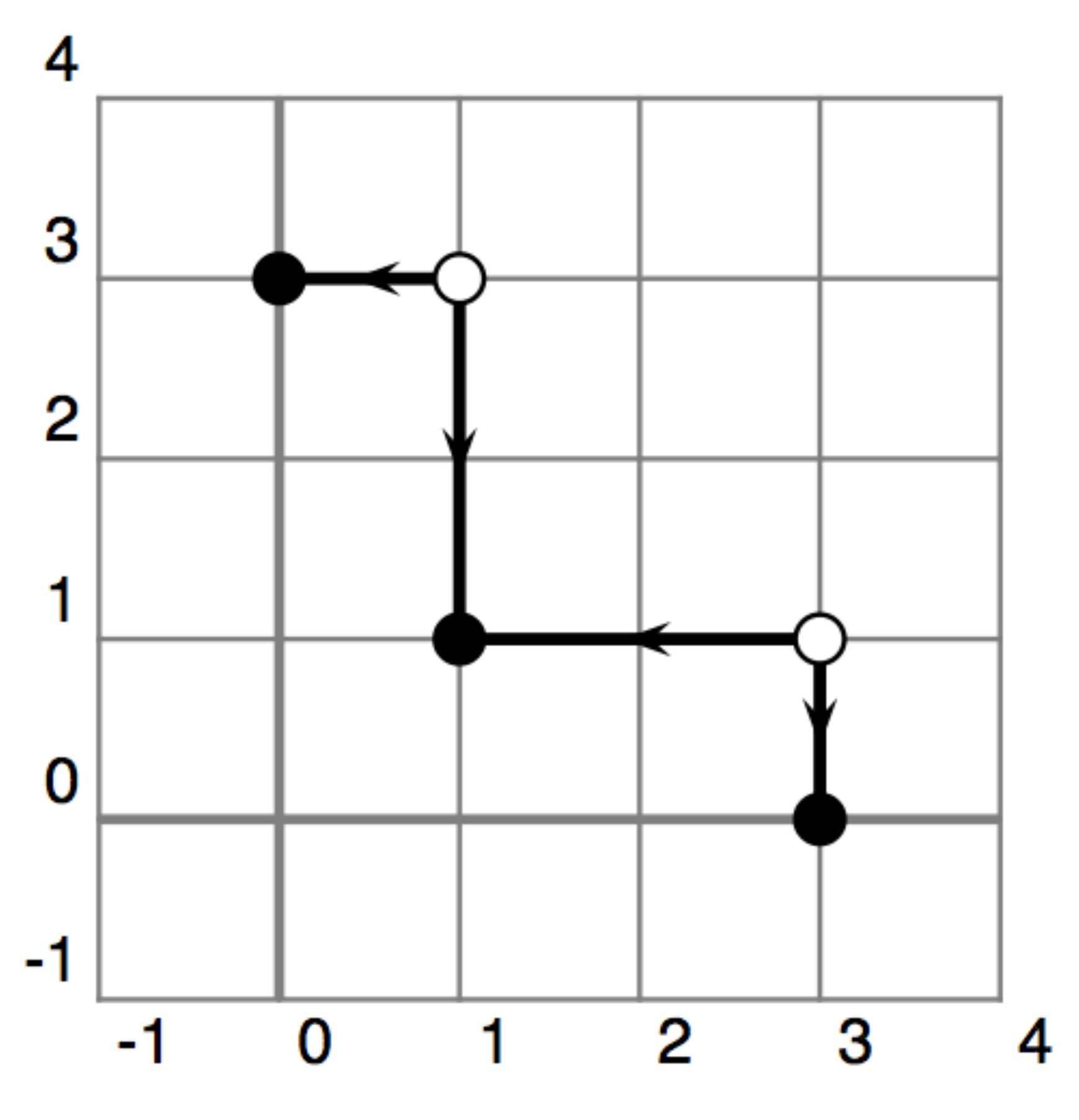} \hskip1in \fig{.25}{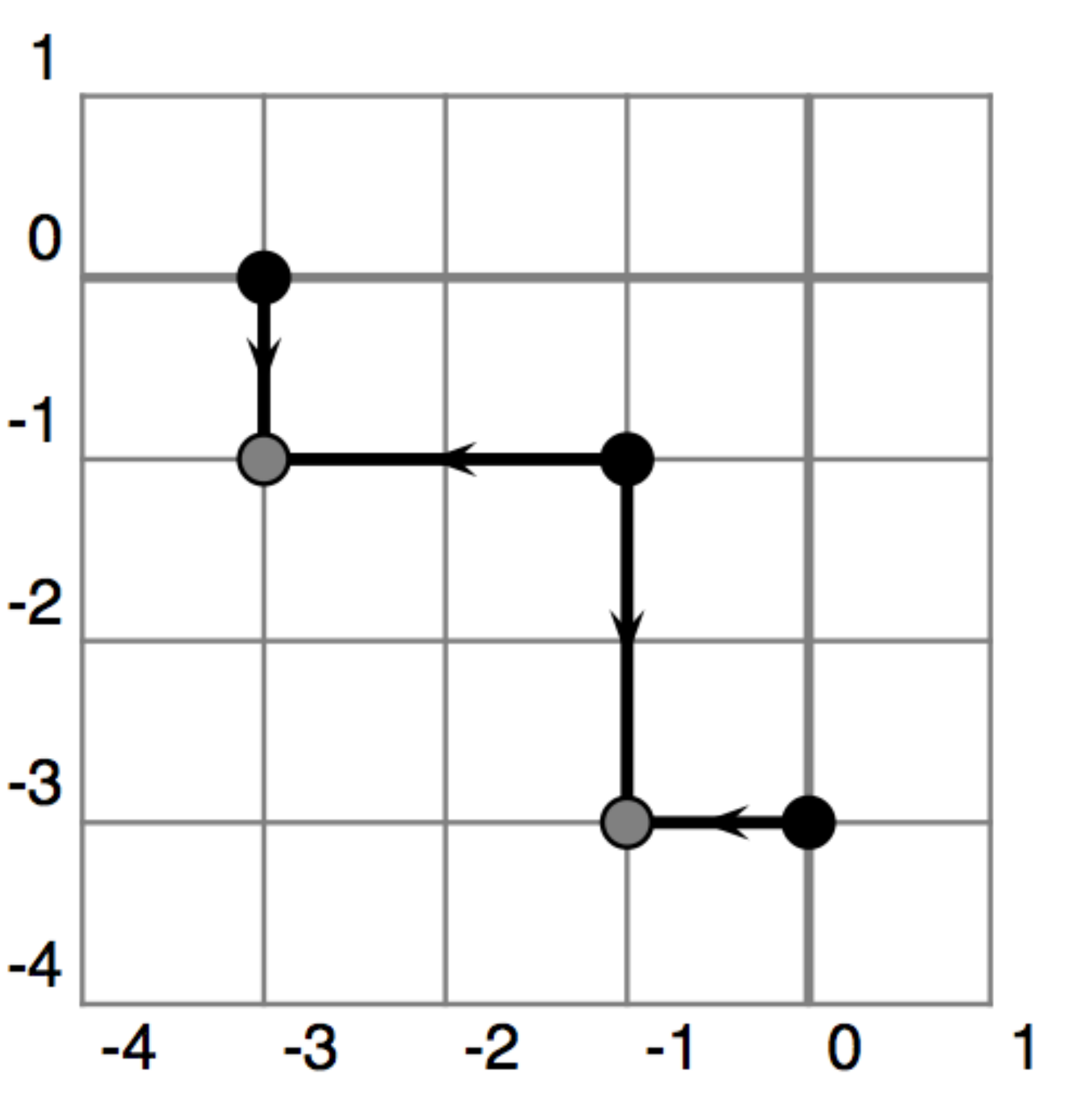}
\caption{$\cfk^\infty(T(3,4))$ and $\cfk^\infty(-T(3,4))$}
\label{cfkt34}
\end{figure}

For $T(3,4)$, each of the black dots represents a generator of $H_0(\cfki)$.  For lines of slopes between $-\infty$ and $-2$, that is, for $t \in [0,2/3]$,  those that go through the vertex $(0,3)$ contain one of the generators, and no line of lesser $y$--intercept does.    Thus, for $t$ in this interval, we have $$\U_{T(3,4)}(t) = -2(3t /2) = -3t.$$
For $t\in[2/3,1]$, the line of slope $m = 1-2/t$ that contains one of the generators goes through the vertex at $(1,1)$.  This line has $y$--intercept $y_0 = 1 -m = 2/t$, and thus  $$\U_{T(3,4)}(t) = -2(y_0t /2) = -2(2 /2)  = -2.$$

\end{example}

\begin{example} For the knot $-T(3,4)$, the only cycle representing a generator of $H_0(\cfk^\infty(K))$ is the sum of the generators at bifiltration levels $(-3,0), (-1,-1)$ and $(0,-3)$.   If $t \in [0,2/3]$ (that is, if the corresponding line has slope less than or equal to $-2$), then for $\calf_{t,s}$ to contain   the generator, it must contain the vertex $(0,-3)$ and so has $y$--intercept $-3$.   
Thus, 
$$\U_{-T(3,4)}(t) = -2(y_0t /2) = -2(-3t /2)  =  3t.$$  If $t\in [2/3, 1]$, the corresponding lines have slope $m \ge -2$, and the line of this slope with least $y$--intercept which bounds a lower half-space that contains a  generator must contain the vertex $(-1,-1)$.  This line has $y$--intercept $y_0 = -1 + m = -1+ 1- 2/t = -2/t$.  For these values of $t$, 
$$\U_{-T(3,4)}(t) = -2(y_0t /2) = -2(-2 /2)  =  2.$$
This is consistent with the fact that $\U_{-K}(t) = - \U_K(t)$.

\end{example}

\section{Defining $\U^2_{K,t}$}\label{definegamma2}

We continue to work with a fixed basis $\calb$ so that chains are represented by a collection of vertices, but note that it is easily checked that the definitions are independent of the choice.  

For a fixed value of $t \in (0,2)$, recall we have by definition  $\gamma_K(t) = -\U_K(t)/2$. Choose a $\delta$ small enough to define $p^-_t$ and $p^+_t$, as in Section~\ref{pivots}, and  let $t^{\pm} = t \pm \delta$.  Let $\calz^\pm $ be the set of cycles in $\calf_{t^\pm,\gamma_K(t^\pm)}$ which represent nontrivial elements in $H_0(\cfki)$.  Notice that each element of $\calz^\pm$ is represented by a set of  vertices that includes one at the lattice point $p^\pm_t$. 

\begin{theorem}\label{thm:Zpm_disjoint}
If $\Delta \U'_K(t) > 0$, then the sets $\calz^-$ and $\calz^+$ are disjoint.  
\end{theorem}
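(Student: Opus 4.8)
The plan is to argue by contradiction. Suppose $z$ lies in $\calz^- \cap \calz^+$. Then $z$ is a cycle representing a generator of $H_0(\cfki) \cong \F$ that lies simultaneously in $\calf_{t^-,\gamma_K(t^-)}$ and in $\calf_{t^+,\gamma_K(t^+)}$, where $t^\pm = t \pm \delta$. I will show this forces $z \in \calf_{t,s}$ for some $s < \gamma_K(t)$, contradicting the definition of $\gamma_K(t)$ as the minimum of the set of $s$ for which $H_0(\calf_{t,s}) \to H_0(\cfki)$ is surjective.

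First I would translate membership into plane geometry. Writing $z$ in the fixed basis $\calb$, the condition $z \in \calf_{t^-,\gamma_K(t^-)}$ says every vertex occurring in $z$ lies in the closed half-plane $\overline{H}^-$ on or below the support line $\call_{t^-,\gamma_K(t^-)}$, and $z \in \calf_{t^+,\gamma_K(t^+)}$ says every such vertex also lies in the closed half-plane $\overline{H}^+$ on or below $\call_{t^+,\gamma_K(t^+)}$. So all vertices of $z$ lie in the convex region $W := \overline{H}^- \cap \overline{H}^+$, and it is enough to prove that $W$ lies strictly below the support line $\call_{t,\gamma_K(t)}$.

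Next I would assemble the needed facts about the three support lines. Since the slope associated to a parameter $\tau \in (0,2)$ equals $1 - 2/\tau$, which is strictly increasing in $\tau$, the three lines $\call_{t^-,\gamma_K(t^-)}$, $\call_{t,\gamma_K(t)}$, $\call_{t^+,\gamma_K(t^+)}$ have slopes $m^- < m < m^+$. By the construction of pivot points in Section~\ref{pivots}, the point $p_t^-$ lies both on $\call_{t,\gamma_K(t)}$ (being in $\calp_t$) and on $\call_{t^-,\gamma_K(t^-)}$ (being in $\calp_{t^-}$), and likewise $p_t^+$ lies on both $\call_{t,\gamma_K(t)}$ and $\call_{t^+,\gamma_K(t^+)}$. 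Finally, Theorem~\ref{thm:upsilon_at_signularity} and the hypothesis $\Delta\U'_K(t) > 0$ give $i(p_t^+) > i(p_t^-)$. Now, because $\call_{t^-,\gamma_K(t^-)}$ and $\call_{t,\gamma_K(t)}$ cross at $p_t^-$ and the first has the smaller slope, a point of $\call_{t,\gamma_K(t)}$ that lies in $\overline{H}^-$ must have first coordinate at most $i(p_t^-)$; symmetrically a point of $\call_{t,\gamma_K(t)}$ in $\overline{H}^+$ must have first coordinate at least $i(p_t^+)$. As $i(p_t^-) < i(p_t^+)$, the line $\call_{t,\gamma_K(t)}$ meets $W$ in no point. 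Since $W$ is convex and disjoint from this line it lies in one of the two open half-planes the line determines, and evaluating on a point with very negative second coordinate---which lies in $W$ because $\overline{H}^-$ and $\overline{H}^+$ are both lower half-planes---shows that half-plane is the lower one. Hence every vertex of $z$ lies strictly below $\call_{t,\gamma_K(t)}$, so $z \in \calf_{t,s}$ with $s := \max\{(t/2)\alex(x) + (1 - t/2)\alg(x) : x \text{ a vertex of } z\} < \gamma_K(t)$; since $[z]$ generates $H_0(\cfki)$ the map $H_0(\calf_{t,s}) \to H_0(\cfki)$ is surjective, contradicting the minimality of $\gamma_K(t)$.

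The heart of the argument is the plane geometry of the third paragraph, and the step needing the most care is getting the two slope comparisons the right way round---equivalently, recognizing that $\call_{t^-,\gamma_K(t^-)}$ pivots about $p_t^-$ and $\call_{t^+,\gamma_K(t^+)}$ about $p_t^+$ so that the wedge $W$ is pinched strictly below $\call_{t,\gamma_K(t)}$ along the segment between the two pivot points. The strict hypothesis $\Delta\U'_K(t) > 0$ enters only through the strict inequality $i(p_t^-) < i(p_t^+)$, which is exactly what prevents $W$ from touching $\call_{t,\gamma_K(t)}$; the remaining steps are routine unwinding of the definitions of $\calf_{t,s}$, $\gamma_K$, and the pivot points.
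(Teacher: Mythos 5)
Your proof is correct, and it takes a somewhat different route from the paper's. The paper argues directly: it invokes the observation (established just before the theorem) that every cycle in $\calz^-$ has a vertex at $p_t^-$, then uses $i(p_t^+) > i(p_t^-)$ and the slope ordering $m^- < m < m^+$ to show $p_t^-$ lies strictly above $\call_{t^+,\gamma_K(t^+)}$, hence $z^-\notin\calf_{t^+,\gamma_K(t^+)}$ and so $z^-\notin\calz^+$; the minimality being exploited is that of $\gamma_K(t^-)$, packaged inside the ``vertex at $p_t^-$'' observation. You instead run a contradiction through the wedge $W=\overline{H}^-\cap\overline{H}^+$: a hypothetical $z\in\calz^-\cap\calz^+$ has all vertices in $W$, and the same slope/pivot geometry shows $W$ is pinched strictly below $\call_{t,\gamma_K(t)}$, so $z$ would violate the minimality of $\gamma_K(t)$ itself. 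Both arguments hinge on the same geometric fact (the strict inequality $i(p_t^-)<i(p_t^+)$ coming from $\Delta\U_K'(t)>0$ via Theorem~\ref{thm:upsilon_at_signularity}), but yours is self-contained in that it does not rely on the ancillary claim that cycles in $\calz^\pm$ must pass through the pivot points, while the paper's is shorter once that claim is available. Your version is perhaps more robust (you never need to know exactly which lattice points lie on $\call_{t^\pm,\gamma_K(t^\pm)}$, only that they pass through the pivots), at the cost of a slightly longer geometric argument.
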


\begin{proof}
Let $z^-$ be a cycle in $\calz^-$; as noted above, $z^-$ is represented by a set of basis elements that includes one at $p^-_t$.
The hypothesis $\Delta \U'_K(t) > 0$ combined with Theorem~\ref{thm:upsilon_at_signularity} implies  that $p_t^-\ne p_t^+$ and $p_t^-$ is not contained in $\calf_{t^+,\gamma_K(t^+)}$. So, $z^-$ is not in $\calz^+$. Similary, one can show the converse:  no element of $\calz^+$ belongs to $\calz^-$. This  completes the proof.
\end{proof}

\begin{definition} \label{def:gamma2}

Let $\calz^\pm = \{z^\pm_j\}$.  For each $s \in [0,2]$, we set $\gamma^2_{K,t}(s)$ to be the minimum value of $r$ such that some $z^-_i$ and $z^+_k$ represent the same homology class in $H_0(\calf_{t, \gamma_K(t)} + \calf_{s,r})$.
\end{definition}
Note that if $\calz^-$ and $\calz^+$ are not disjoint, then $\gamma^2_{K,t}(s)=-\infty$.

This definition is made more clear with an example. 

\begin{example}  Figure~\ref{fig34double} illustrates the complex for $T(3,4)$.  The half-space with boundary line containing the vertices $(0,3) $ and $(1,1)$ is shaded in.  The slope of its support line is $-2$, and thus it corresponds to the value of $\U_K$ at $t = \frac{2}{1- (-2)} = \frac{2}{3}$.  Recall $\U_K(\frac{2}{3} )= -2$. In this case, $\calz^\pm$ each contains exactly one element, represented by vertices at $p^-$ and $p^+$, with coordinates $(0,3)$ and $(1,1)$.

The second line in the figure, through $(1,3)$,  was chosen to have slope $-\frac{1}{4}$.  The corresponding value of $s$ is $s = \frac{2}{1-(-1/4)} = \frac{8}{5}$.  The particular line drawn is the line of slope $-\frac{1}{4} $ with the least $j$--intercept for which the classes represented by $p^\pm$ are equal in $H_0(\calf_{t, u(t)} + \calf_{s,r})$.  (This subcomplex is represented by the union of the two shaded half-spaces.)  To compute the value of $r$, rather than find the $j$--intercept, we compute the value of $\frac{s}{2}(j) + (1 -\frac{s}{2})(i) $  at $(1,3)$.  The value of $r$ is given by
$$r = ((8/5)/2)(3) + ((1 - (8/5)/2)(1) = 13/5.$$
We now have that $\gamma^2_{K,\frac{2}{3}}\left(\frac{8}{5}\right) = \frac{13}{5}$.  

\begin{figure}[h]\label{fig:doublefilter}
\fig{.2}{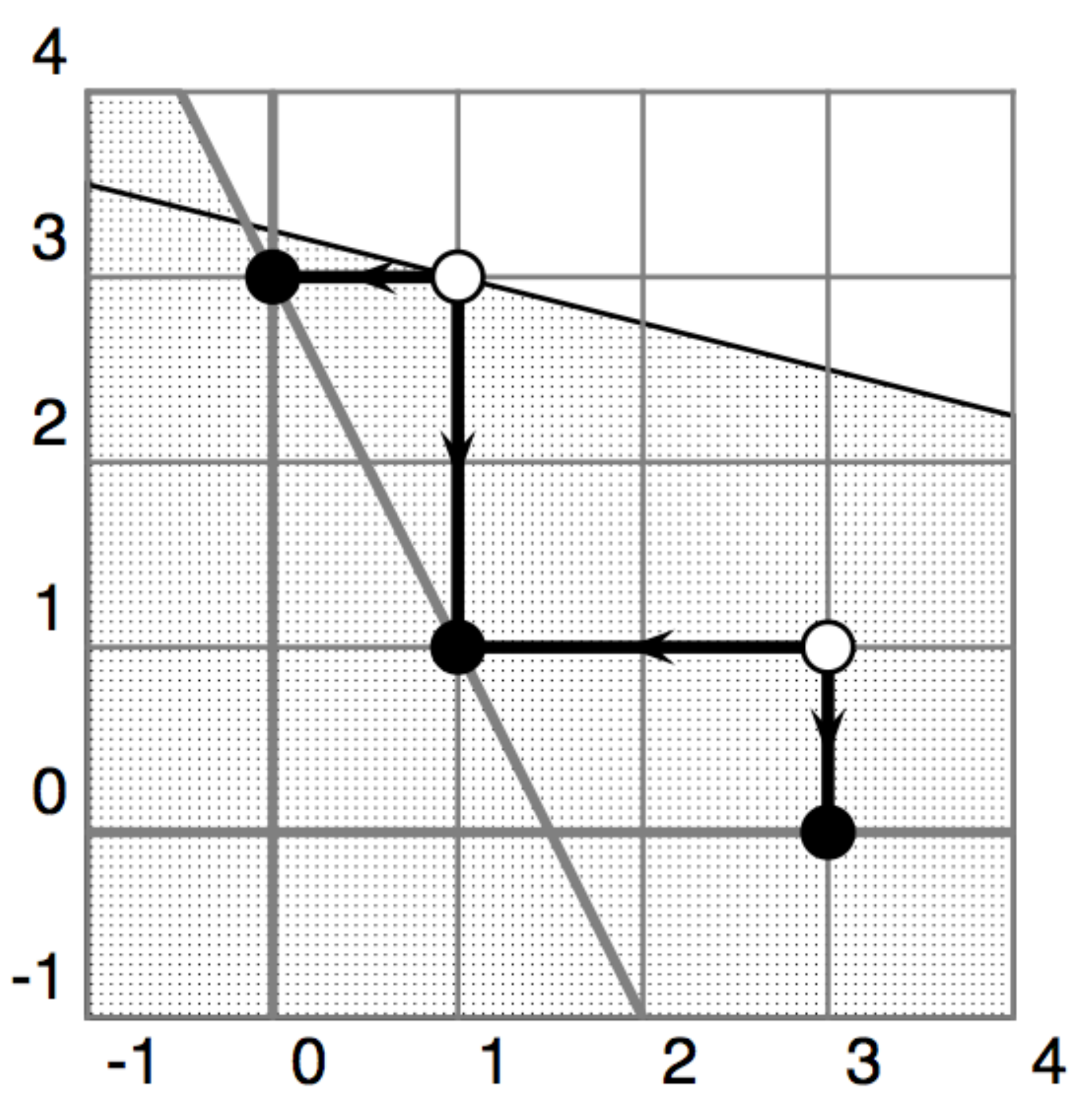}
\caption{$\cfk^\infty(T(3,4))$ doubly filtered}
\label{fig34double}
\end{figure}
\end{example}

\begin{definition}  $$\U^2_{K,t}(s)= -2\gamma^2_{K,t}(s) - \U_{K,t} = -2(\gamma^2_{K,t}(s) - \gamma_K(t)).$$
\end{definition}

\begin{example} For $K= T(3,4)$, $\U_K(t)$ has  a singularity at $t = \frac{2}{3}$; at that singularity the slope has  a positive jump.   Note that $\U_K(\frac{2}{3}) = -2$.  We compute that for $s\in [0,2]$ the line through $(1,3)$ gives
$$r = (s/2)(3) + (1 -s/2)(1) =  1 + s = \gamma^2_{K,\frac{2}{3},s}.$$  It follows that 
$$\U^2_{K,\frac{2}{3}}(s) = -2(1+s) - (-2) = -2s.$$ \end{example}

\begin{example} A more interesting example is provided by the torus knot $K = T(5,7)$, with $\cfk^\infty(K)$ as illustrated in Figure~\ref{fig:t57}.

\begin{figure}[h]
\fig{.35}{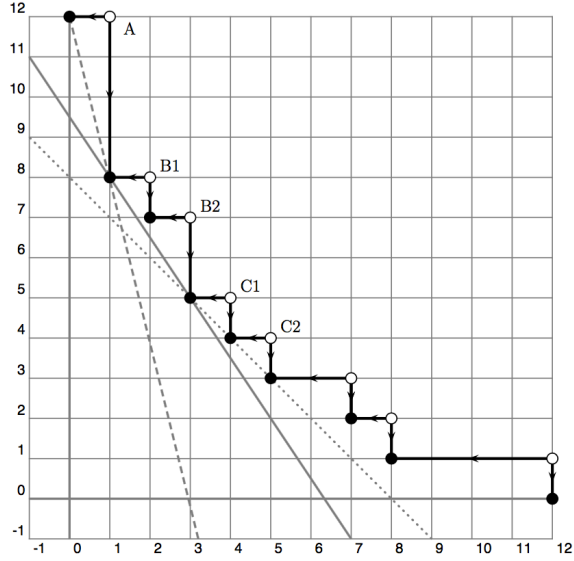}
\caption{$\cfk^\infty(T(5,7))$ }
\label{fig:t57}
\end{figure}
In the figure, support lines of slope $m = -4, -3/2, $ and $-1$ are drawn, and these correspond to the singular values of $\U_K$ at $t= 2/5, 4/5$ and $1$.
A computation quickly shows that 
$$
\U_K(t)  =   \begin{cases}
-12t &\mbox{if } 0 \le t \le 2/5\\
-2 -7t    &\mbox{if } 2/5 \le t \le 4/5\\
-6 -2t    &\mbox{if } 4/5 \le t \le 1.
\end{cases} 
$$

We now consider $\U^2_{K,t}$: for $t= 2/5$ (where the relevant point is labeled $A$ in figure, at $(1,12)$; for $t=4/5$, where the relevant points are labeled $B1$ and $B2$ at $(2,8)$ and $(3,7)$; and for $t=1$, where the relevant points are labeled $C1$ and $C2$ at $(4,5)$ and $(5,4)$.   It is  a straightforward computation to show that:

$$
\U^2_{K,2/5}(s)  =   
\tfrac{14}{5} -11s \mbox{ if } 0 \le s \le 2.
$$

$$
\U^2_{K,4/5}(s)  =   \begin{cases}
\frac{8}{5} -4s &\mbox{if } 0 \le s \le 1\\
\frac{18}{5} -6s    &\mbox{if } 1 \le s \le 2.
\end{cases} 
$$

$$
\U^2_{K,1}(s)  =   \begin{cases}
-2+s &\mbox{if } 0 \le s \le 1\\
-s    &\mbox{if } 1 \le s \le 2.
\end{cases} 
$$

\end{example}

\begin{example}
If $\Delta \U'_K(t) > 0$, then  $\U^2_{K,t}(s)<0$ for some $s$ by Theorem~\ref{thm:Zpm_disjoint}. In fact, $\U^2_{K,t}(t)<0$ in this case. If $\Delta \U'_K(t) \le 0$, there are no known conditions on the signs of $\U^2_{K,t}(s)$. 
There are knots with $\Delta \U'_K(t) < 0$ and $\U^2_{K,t}(s)=\infty$.  For example, $K$ can be the mirror image of $T(3,4)$ or $T(5,7)$. Note that these knots have intersecting $\calz^\pm$. 

On the other hand, there are $\calk$--complexes $\rmc$ with $\Delta \U'_{\rmc}(t) < 0$ and $\U^2_{\rmc,t}(s)<0$. For example, consider the complex in Figure~\ref{fig:U'dec}.  For $t=1$, $z^-$ is the sum of the generators at $(-3,1)$ and $(0,-2)$; $z^+$ is the sum of the generators at $(-2,0)$ and $(1,-3)$.  This complex  has $\Delta \U'_{\rmc}(1)=-4$ and $\U^2_{\rmc,1}(s)=-4$.

\begin{figure}[h]
\fig{.3}{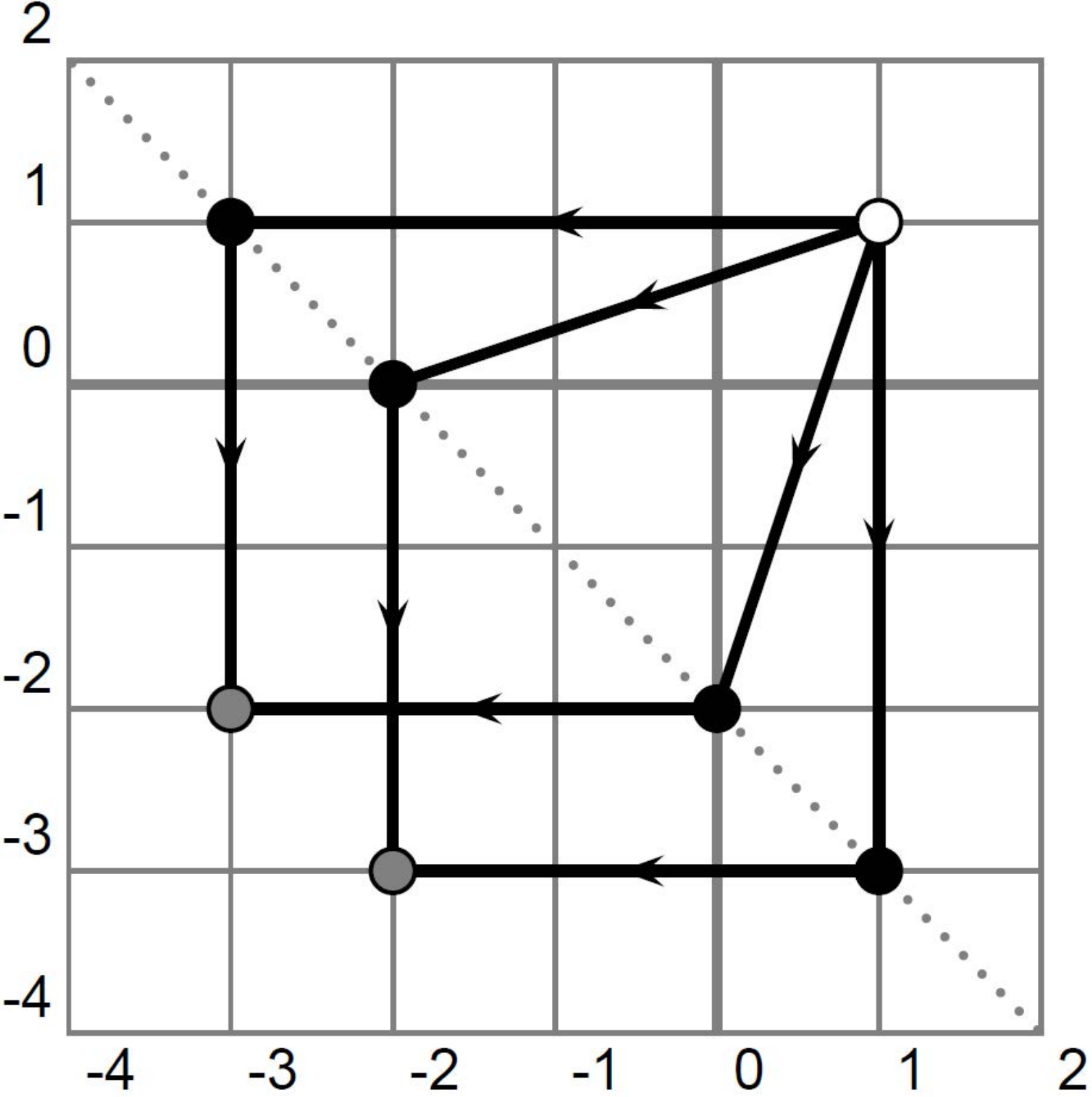}
\caption{A $\calk$--complex $\rmc$ with $\Delta \U'_{\rmc}(1)<0$ and $\U^2_{\rmc,1}(s)<0$ }
\label{fig:U'dec}
\end{figure}
\end{example}

\subsection{Concordance invariance}
The definitions for $\U$ and $\U^2$ can be extended to $\calk$--complexes.
\begin{theorem} Adding acyclic summands to a $\calk$--complex does not change the value of  $\U^2$. Therefore, $\U^2$ is a concordance invariant of knots.
\end{theorem}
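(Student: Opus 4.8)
The plan is to reduce everything to the interaction between the filtered subcomplexes $\calf_{t,s}$ and direct sums, together with the single fact that an acyclic summand contributes nothing to $H_0$. Fix a $\calk$--complex $\rmc$ and an acyclic complex $\rma$, put $\rmc'=\rmc\oplus\rma$, and equip $\rmc'$ with the basis that is the union of chosen bifiltered graded bases of $\rmc$ and $\rma$ (legitimate since $\U$ and $\U^2$ are basis independent). The first step is a purely formal observation: because the defining inequality for $\calf_{t,s}$ is checked basis element by basis element, one has $\calf_{t,s}(\rmc')=\calf_{t,s}(\rmc)\oplus\calf_{t,s}(\rma)$ and, more generally, $\calf_{t,s}(\rmc')+\calf_{s',r}(\rmc')=\big(\calf_{t,s}(\rmc)+\calf_{s',r}(\rmc)\big)\oplus\big(\calf_{t,s}(\rma)+\calf_{s',r}(\rma)\big)$ as subcomplexes, with $\partial$ respecting these splittings. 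Since $\rma$ is acyclic as a graded complex, $H_0(\rmc')\cong H_0(\rmc)\cong\F$ via the projection $\rmc'\to\rmc$; consequently a cycle $z=w+a\in\calf_{t,s}(\rmc')$ (automatically $w,a$ are cycles) represents the generator of $H_0(\rmc')$ if and only if its $\rmc$--component $w$ represents the generator of $H_0(\rmc)$, because $[a]=0$ in $H_0(\rma)=0$. Feeding this into the definition of $\gamma$ gives $\gamma_{\rmc'}(t)=\gamma_\rmc(t)$ for all $t$ (re-proving $\U_{\rmc'}=\U_\rmc$), and, since the bifiltration-level sets satisfy $\calp(\rmc)\subseteq\calp(\rmc')$, a single $\delta$ can be chosen small enough to define the pivot points — and hence $\calz^\pm$ — for both $\rmc$ and $\rmc'$.

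With $t^\pm=t\pm\delta$ fixed this way, the generator description above yields
$$\calz^\pm(\rmc')=\big\{\, w+a \ \big|\ w\in\calz^\pm(\rmc),\ a\in\calf_{t^\pm,\gamma_K(t^\pm)}(\rma)\text{ a cycle}\,\big\},$$
so projection to $\rmc$ sends $\calz^\pm(\rmc')$ onto $\calz^\pm(\rmc)$, and $\calz^\pm(\rmc)$ includes into $\calz^\pm(\rmc')$ by setting $a=0$. I would then compare $\gamma^2_{K,t}(s)$ for the two complexes. The inequality $\gamma^2_{\rmc',t}(s)\le\gamma^2_{\rmc,t}(s)$ is immediate: if $w^-\in\calz^-(\rmc)$ and $w^+\in\calz^+(\rmc)$ become homologous in $H_0\big(\calf_{t,\gamma_K(t)}(\rmc)+\calf_{s,r}(\rmc)\big)$, the witnessing chain still works in the larger subcomplex obtained by adjoining the $\rma$--part. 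For the reverse inequality, suppose $z^\pm=w^\pm+a^\pm\in\calz^\pm(\rmc')$ become homologous at level $r$, say $z^-+z^+=\partial y$ with $y\in\calf_{t,\gamma_K(t)}(\rmc')+\calf_{s,r}(\rmc')$. Writing $y=y_\rmc+y_\rma$ along the splitting and using that $\partial$ preserves it, we get $w^-+w^+=\partial y_\rmc$ with $y_\rmc\in\calf_{t,\gamma_K(t)}(\rmc)+\calf_{s,r}(\rmc)$ by the formal observation; since $w^\pm\in\calz^\pm(\rmc)$, this exhibits the homology at level $r$ already in $\rmc$. Taking infima gives $\gamma^2_{\rmc,t}(s)=\gamma^2_{\rmc',t}(s)$, hence $\U^2_{\rmc,t}=\U^2_{\rmc',t}$.

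Concordance invariance then follows from Proposition~\ref{homstablethm}: concordant knots $K,J$ give acyclic $\rma_1,\rma_2$ with $\cfk^\infty(K)\oplus\rma_1=\cfk^\infty(J)\oplus\rma_2$, whence $\U^2_{K,t}=\U^2_{\cfk^\infty(K)\oplus\rma_1,t}=\U^2_{\cfk^\infty(J)\oplus\rma_2,t}=\U^2_{J,t}$. The step I expect to demand the most care is the reverse inequality for $\gamma^2$: one must rule out that the extra basis elements from $\rma$ create a \emph{shorter} chain realizing the homology between a positive and a negative zero-cycle, and the projection $y\mapsto y_\rmc$ is exactly the mechanism that excludes this — it discards the acyclic contribution while preserving the boundary relation on the $\rmc$ side and keeping the components inside $\calz^\pm(\rmc)$. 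A minor point to verify carefully is that $\delta$ (equivalently, the perturbed complexes $\calf_{t^\pm,\gamma_K(t^\pm)}$) can be taken uniformly for $\rmc$ and $\rmc'$, which is the content of $\gamma_{\rmc'}=\gamma_\rmc$ combined with $\calp(\rmc)\subseteq\calp(\rmc')$.
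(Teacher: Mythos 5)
Your proof is correct and follows the same strategy as the paper: the easy inequality $\gamma^2_{\rmc\oplus\rma,t}(s)\le\gamma^2_{\rmc,t}(s)$ from the definition, and the reverse inequality by projecting the witnessing chain onto the $\rmc$--summand and observing that $\partial$ respects the splitting. Your write-up simply spells out in more detail the supporting facts (equality of $\gamma$, compatibility of $\delta$, the description of $\calz^\pm(\rmc\oplus\rma)$) that the paper's terser argument uses implicitly.
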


\begin{proof}
Let $\rmc$ be a $\calk$--complex and $\rma$ an acyclic complex. It is obvious that $\gamma^2_{\rmc\oplus \rma,t}(s)\le \gamma^2_{\rmc,t}(s)$ from the definition. To show the reversed inequality, let $z^-$ and $z^+$ be two cycles representing the same homology class in $H_0$ as in Definition~\ref{def:gamma2}. Let $c$ be a chain such that $\partial c=z^--z^+$. We can write $z^\pm=z^\pm_{\rmc} + z^\pm_{\rma}$ and $c=c_{\rmc} + c_{\rma}$, where $z^\pm_{\rmc}$ and $c_{\rmc}$ are elements of $\rmc$, and $z^\pm_{\rma}$ and $c_{\rma}$ are elements of $\rma$. Then $\partial c_{\rmc}=z^-_{\rmc}-z^+_{\rmc}$ and $\partial c_{\rma}=z^-_{\rma}-z^+_{\rma}$. Since each $z^\pm_{\rmc}$ belongs to the corresponding subcomplex $\calf_{t^\pm,\gamma_{\rmc}(t)}$ of $\rmc$, this implies that $\gamma^2_{\rmc,t}(s)\le \gamma^2_{\rmc\oplus \rma,t}(s)$, as desired. Finally, by Proposition~\ref{homstablethm} we see that $\U^2$ is a concordance invariant of knots.
\end{proof}

\begin{example} In~\cite{hom-epsilon-upsilon}, Hom constructed the complex $\cfk^\infty(T(4,5) \cs -T(2,3;2,5))$.   In Figure~\ref{fig:ladders} we illustrate this complex (modulo a missing acyclic summand), along with  the complexes for $T(2,5)$ and $-T(2,5)$.  We abbreviate these by $\rmc_1$, $\rmc_2$ and $-\rmc_2$.  For both  $\rmc_1$ and $\rmc_2$ we have
\begin{center}
$\U_K(t) = \begin{cases}
-2t &\mbox{if } t \le 1\\
-4+2t &\mbox{if } t\ge 1.
\end{cases}
$
\end{center}
In both cases, the only singularity is at $t =1$, at which the pivot points are $p^- = (0,2)$ and $p^+ = (2,0)$;   there is only one cycle in each of $\calz^\pm$, and these are  represented by a single basis element, located at either $ (0,2)$ or $  (2,0)$.  

\begin{figure}[h]
\fig{.15}{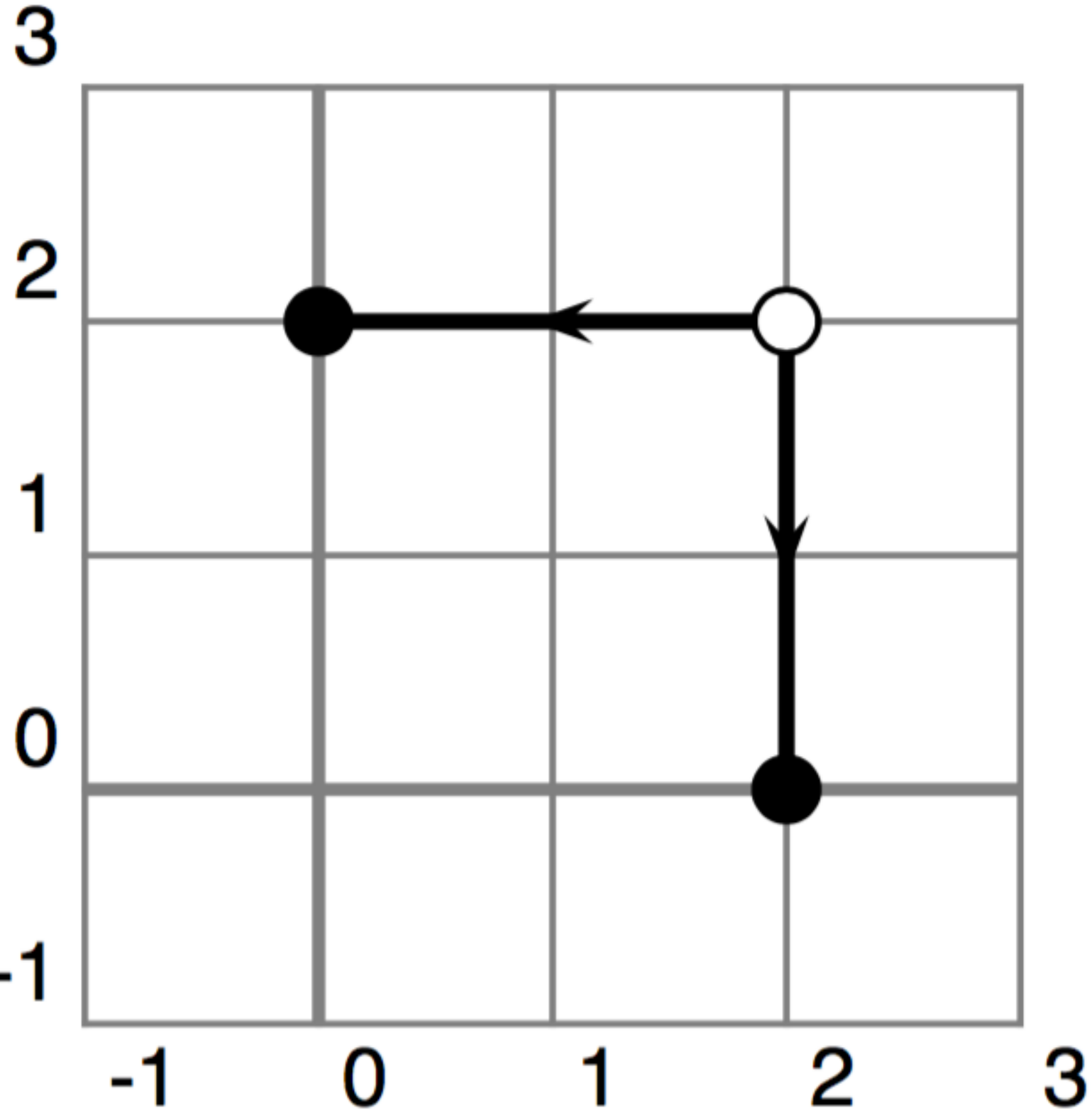} \hskip.5in  \fig{.15}{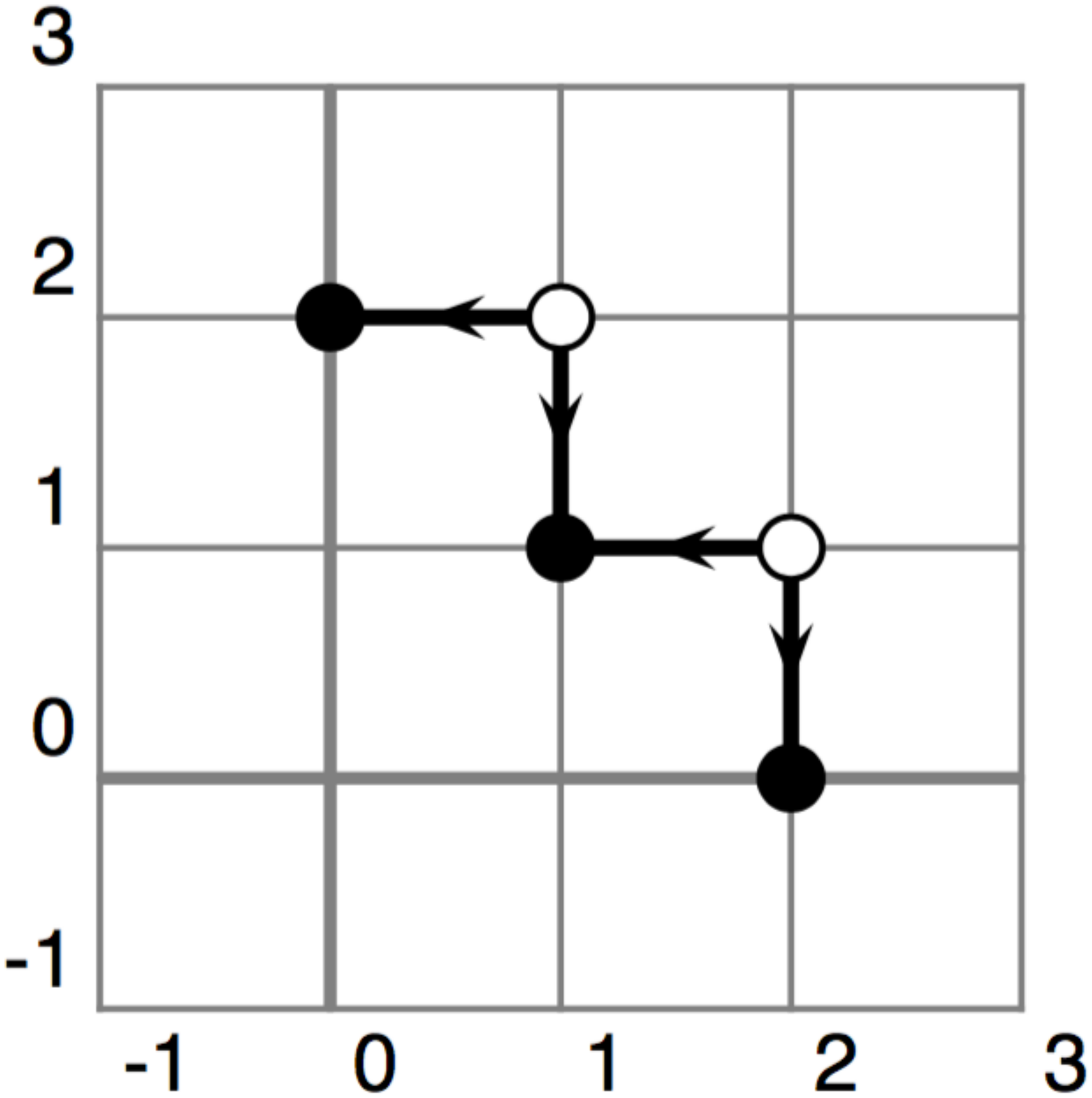} \hskip.5in \fig{.15}{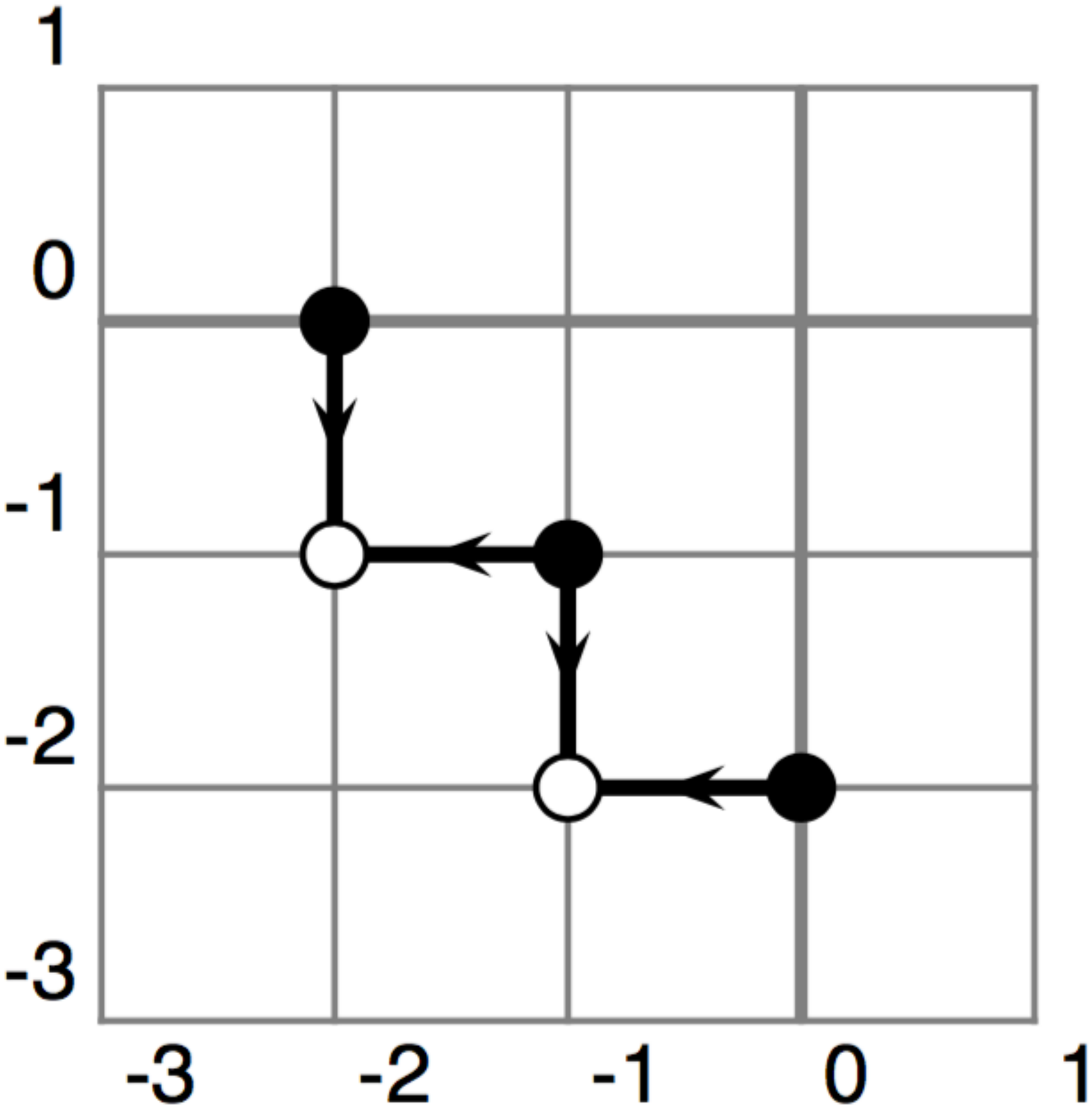}
\caption{Complexes, modulo acyclics, for $T(4,5) \cs T(2,3;4,5)$, $T(2,5)$, and $-T(2,5)$}\label{fig:ladders}
\end{figure}
\end{example}

For $\rmc_1$ the difference of the cycles $z^\pm$ form the boundary of the chain consisting of a single basis element, located at bifiltration level $(2,2)$.  For $\rmc_2$, the cycles $z^\pm$ are the boundary of the chain formed as the sum of two  basis elements, with filtration levels $(1,2)$ and $(2,1)$.  Computing $\U^2$ for $\rmc_1$ is easier, since the relevant support lines must go through the vertex $(2,2)$.  For  $\rmc_2$, the relevant support line goes through $(2,1)$ if $s\le 1$ and through $(1,2)$ is $s \ge 1$.   We leave the details to the reader.

$$\U^2_{\rmc_1, 1}(s) = -2.$$

$$
\U^2_{\rmc_2, 1}(s) =   \begin{cases}
-2 +s &\mbox{if } s \le 1\\
-s &\mbox{if } s\ge 1.
\end{cases} 
$$

Building from the  previous example, one can compute that for the knot  $$K = T(4,5) \cs -T(2,3;2,5) \cs - T(2,5)$$  the complex $\cfki$ is as illustrated in Figure~\ref{figureu0} (again, modulo acyclic summands).  This computation calls on a change of basis as well as forming the tensor product.  Given that this is the difference of two complexes with the same $\U_K$ function, the difference has $\U_K$ identically 0. 

\begin{figure}[h]
\fig{.25}{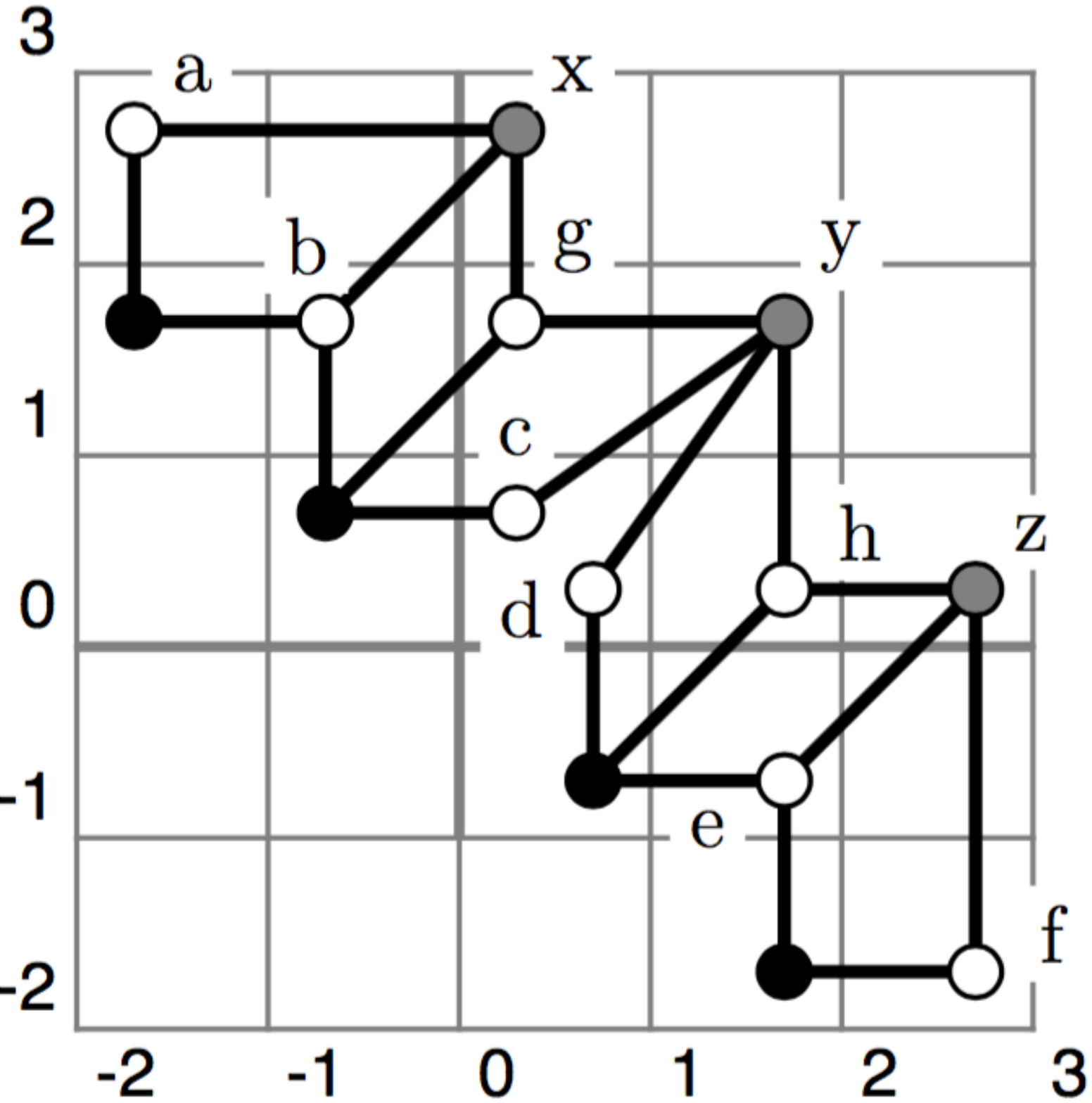}
\caption{$\cfk(T(4,5) \cs -T(2,3;2,5) \cs - T(2,5))$}
\label{figureu0}
\end{figure}

Generators of grading 0 are drawn as white circles, those of grading $-1$ are black, and those of grading 1 are gray.  The group of cycles of grading 0 is generated by four cycles:   $$\left< a+b +c, d+e+f, g+c, h+d\right>.$$  The pivoting vertices $p^-$ and $p^+$ are both $(0,0)$.  The cycles in $\calz^\pm$ are unique:  $z^- = a+b+c$ and $z^+ = d + e + f$.   The fact that these represent the same class in $H_0(\cfki)$ follows from their sum being the boundary of the chain $x + y + z$.  This is the only possible chain whose boundary offers a relation between them.  

To compute $\U^2_{K,1}(s)$, in the case that $s \le 1$ we see the support line must go through the vertex containing $z$, that is, $(2,0)$.  For this vertex, the value of $\frac{s}{2}j + (1 - \frac{s}{2})i$ is $2 - s$.  Multiplying by $-2$ (and subtracting $\U_K(1) = 0$) yields $-4 + 2s$.  

For $s \ge 0$, the essential vertex is at $(0,2)$, for which $\frac{s}{2}j + (1 - \frac{s}{2})i$ has value $s$.  Multiplying by $-2$ gives $-2s$.  Thus,

$$
\U^2_{K, 1}(s) =   \begin{cases}
-4 +2s &\mbox{if } s \le 1\\
-2s &\mbox{if } s\ge 1.
\end{cases} 
$$

Hom   constructed this knot to build a concordance class that is nontrivial  and  which cannot be detected with $\U_K$, but can be detected by $\epsilon$.  Here we see it is also detected by $\U^2_{K,t}$.  In Section~\ref{sec-genus}  we will use this example to build new ones related to bounds on the concordance genus.

\section{Subadditivity} 
In the following statement of subadditivity, the appearance of the minimum instead of the maximum is explained by the presence of the factor of $-2$ in the definition of $\U^2$.  This will become clear in the first sentence of the  proof.  We state the theorem in terms of tensor products of  $\calk$--complexes, implying the similar statement for connected sums of knots.
\begin{theorem}  
For any $\calk$--complexes $\rmc_1$ and $\rmc_2$,  $$\U^2_{\rmc_1\otimes \rmc_2, t}(t) \ge \min\{ \U^2_{\rmc_1, t}(t), \U^2_{ \rmc_2, t}(t)\} .$$
\end{theorem}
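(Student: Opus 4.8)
The plan is to work directly with the tensor product complex and the chains that witness the relevant filtration levels in $\rmc_1$ and $\rmc_2$, then combine them to produce a witness chain in $\rmc_1 \otimes \rmc_2$. First I would set $t$ fixed and abbreviate $\gamma_i = \gamma_{\rmc_i}(t)$, $\gamma = \gamma_{\rmc_1\otimes\rmc_2}(t)$. Since $\U$ is additive under tensor product, $\gamma = \gamma_1 + \gamma_2$, and the pivot points of the tensor product at $t$ satisfy $p^\pm = p^\pm_1 + p^\pm_2$ (adding coordinates of pivot points in the two factors); I would verify this compatibility from Theorem~\ref{thm:upsilon_at_signularity} and the product filtration formula. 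For each factor $i$, let $z^-_i, z^+_i$ be cycles in $\calz^\pm$ for $\rmc_i$ realizing $\gamma^2_{\rmc_i,t}(t)$, so there is a chain $c_i$ with $\partial c_i = z^-_i - z^+_i$ and with $c_i$ supported at filtration level $\le \gamma^2_{\rmc_i,t}(t)$ under the slope-$t$ functional (here I mean: the chain lives in $\calf_{t,\gamma_i}+\calf_{t,\gamma^2_{\rmc_i,t}(t)}$, exploiting that we evaluate $\U^2$ at $s=t$, so only one slope is in play). Without loss of generality assume $\gamma^2_{\rmc_1,t}(t) \le \gamma^2_{\rmc_2,t}(t)$, i.e.\ $\U^2_{\rmc_1,t}(t) \ge \U^2_{\rmc_2,t}(t)$.

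The key step is the explicit witness in the product. Consider the cycles $z^- := z^-_1 \otimes z^-_2$ and $z^+ := z^+_1 \otimes z^+_2$ in $\rmc_1\otimes\rmc_2$; each is a nontrivial cycle in $H_0$ (tensor of generators), each is supported at the corresponding product pivot point, so $z^\pm \in \calz^\pm$ for the product. The chain
$$
c := c_1 \otimes z^-_2 + z^+_1 \otimes c_2
$$
satisfies $\partial c = (z^-_1 - z^+_1)\otimes z^-_2 + z^+_1 \otimes (z^-_2 - z^+_2) = z^-_1\otimes z^-_2 - z^+_1\otimes z^+_2 = z^- - z^+$. Now I would bound the slope-$t$ filtration level of $c$: each of $z^-_2, z^+_1$ sits at filtration level $\le \gamma_2$, resp.\ $\le \gamma_1$; the summand $c_1 \otimes z^-_2$ therefore has filtration $\le \gamma^2_{\rmc_1,t}(t) + \gamma_2$, and $z^+_1 \otimes c_2$ has filtration $\le \gamma_1 + \gamma^2_{\rmc_2,t}(t)$. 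Taking the max and using $\gamma^2_{\rmc_1,t}(t)\le\gamma^2_{\rmc_2,t}(t)$ (so the second is the larger after adding the respective $\gamma_j$'s — this needs the elementary check $\gamma^2_{\rmc_i,t}(t)\ge\gamma_i$, which holds since the pivot point of the $i$-th factor lies on $\call_{t,\gamma_i}$), the chain $c$ is supported in $\calf_{t,\gamma} + \calf_{t, \gamma_1 + \gamma^2_{\rmc_2,t}(t)}$. Hence $\gamma^2_{\rmc_1\otimes\rmc_2,t}(t) \le \gamma_1 + \gamma^2_{\rmc_2,t}(t)$, and multiplying by $-2$ and subtracting $\U_{\rmc_1\otimes\rmc_2}(t)/1$... more precisely using $\U^2_{K,t}(t) = -2(\gamma^2_{K,t}(t) - \gamma_K(t))$ and $\gamma = \gamma_1+\gamma_2$:
$$
\U^2_{\rmc_1\otimes\rmc_2,t}(t) = -2\big(\gamma^2_{\rmc_1\otimes\rmc_2,t}(t) - \gamma\big) \ge -2\big(\gamma_1 + \gamma^2_{\rmc_2,t}(t) - \gamma_1 - \gamma_2\big) = \U^2_{\rmc_2,t}(t),
$$
which is $\min\{\U^2_{\rmc_1,t}(t),\U^2_{\rmc_2,t}(t)\}$ by our normalization.

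I expect the main obstacle to be the bookkeeping around pivot points and the degenerate cases: one must make sure that $z^\pm_1\otimes z^\pm_2$ genuinely represents an element of $\calz^\pm$ for the product — i.e.\ that these tensor cycles are in $\calf_{t^\pm, \gamma_{\rmc_1\otimes\rmc_2}(t^\pm)}$, which requires knowing how $\gamma(t^\pm)$ and the pivot points behave under tensor product near the singularity. If $\Delta\U'_{\rmc_i}(t)\le 0$ for one of the factors, $\calz^\pm$ may not be disjoint and $\gamma^2_{\rmc_i,t}(t)$ could be $-\infty$ or $+\infty$; I would handle these by noting that $-\infty$ on the right makes the inequality trivial, and that if some $\calz^\pm_i$ are not disjoint the same failure propagates to the product via the witness chain above, so $\U^2$ of the product is also $-\infty$. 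A secondary subtlety is that in the product $\calz^\pm$ for $\rmc_1\otimes\rmc_2$ may contain cycles not of split form, but since $\gamma^2$ is defined as a \emph{minimum} over pairs, exhibiting one split pair realizing the claimed bound suffices — this is exactly why only an inequality, and only at $s=t$, is being asserted.
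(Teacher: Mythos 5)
Your proposal uses the same key idea as the paper's proof: exhibit a witness chain of the form $c_1\otimes(\text{cycle of }\rmc_2) + (\text{cycle of }\rmc_1)\otimes c_2$ whose boundary connects tensor-product pivot cycles, then translate the resulting filtration bound into the stated inequality on $\U^2$. Modulo the choice of which $\pm$-cycles to tensor against (irrelevant over $\F_2$), your chain $c_1\otimes z^-_2 + z^+_1\otimes c_2$ is the paper's $c_1\otimes z_2^+ + z_1^-\otimes c_2$ with labels swapped, and your reduction to the arithmetic of $\gamma^2 - \gamma$ is exactly what the paper asserts as ``the rest is arithmetic.''

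There is, however, a genuine (if minor) slip in your final bookkeeping. You write the WLOG as $\gamma^2_{\rmc_1,t}(t)\le\gamma^2_{\rmc_2,t}(t)$ and assert this is equivalent to $\U^2_{\rmc_1,t}(t)\ge\U^2_{\rmc_2,t}(t)$, but since $\U^2_{K,t}(t) = -2\bigl(\gamma^2_{K,t}(t)-\gamma_K(t)\bigr)$, the correct equivalence involves the \emph{differences} $\gamma^2_{\rmc_i,t}(t)-\gamma_i$, not the raw values $\gamma^2_{\rmc_i,t}(t)$. The observation $\gamma^2_{\rmc_i,t}(t)\ge\gamma_i$ does not bridge this gap; e.g.\ $\gamma^2_1=1,\ \gamma_1=0,\ \gamma^2_2=2,\ \gamma_2=2$ satisfies all your stated inequalities yet has $\gamma^2_1-\gamma_1>\gamma^2_2-\gamma_2$. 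As a result, the chosen summand $z^+_1\otimes c_2$ need not dominate, and the inequality you extract may be the wrong one of the two. The fix is exactly what the paper does: forgo the WLOG, bound $\gamma^2_{\rmc_1\otimes\rmc_2,t}(t)-\gamma_{\rmc_1\otimes\rmc_2}(t)$ by the max of the two candidate values $\gamma^2_{\rmc_i,t}(t)-\gamma_i$ directly, then multiply by $-2$ to obtain the min. With that correction your argument becomes identical to the paper's.
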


\begin{proof}  This is proved by showing that  $$(\gamma^2_{\rmc_1 \cs \rmc_2, t}(t) - \gamma_{\rmc_1 \cs \rmc_2, t}(t)) \le \max\{ (\gamma^2_{\rmc_1 , t}(t) - \gamma_{\rmc_1 , t}(t)), (\gamma^2_{  \rmc_2, t}(t) - \gamma_{  \rmc_2 , t}(t))\} .$$

From the definition of $\gamma^2$, for $r_1 = \gamma^2_{\rmc_1,t}$  there is a chain $c_1 \in \calf_{ t, \gamma_{\rmc_1}(t)} +\calf_{t,r_1}$ such that $\partial c_1 = z_1^+ - z_1^-$, with $z_1^\pm$ as described in Section~\ref{definegamma2} for the complex $\rmc_1$.  Similarly, for the complex $\rmc_2$ there is a chain $c_2$.  One has 

$$\partial( c_1 \otimes z_2^+ + z_1^- \otimes c_2) = z_1^+ \otimes z_2^+ - z_1^- \otimes z_2^-.$$
The rest is arithmetic.
\end{proof}

Note that  we consider $\U_{K,t}(t)$ in the theorem. For $\U_{K,t}(s)$ with $t\ne s$, the theorem would be false in general.

\section{Linear independence}

In Figure~\ref{figuresquare} we illustrate a $\calk$--complex which we denote $K_n$; the sides of the square are of length $2n$ and the central vertex is at the origin and has  grading 0.  The second diagram is the inverse complex, which we denote $-K_n$.  
A similar complex, in which the vertex $B$ is not in the center of the square, was used in~\cite{oss} as an example for which the nonvanishing of $\U$ implies  that the complex is not null concordant, whereas the $\epsilon$--invariant vanishes.  

For $K_n$ there are two cycles that represent generators of $H_0$:  $A +B$ and $B+C$, and these are homologous.  For $-K_n$ the classes are $A+C$ and $B$, and these are homologous.  

\begin{figure}[h] 
\fig{.2}{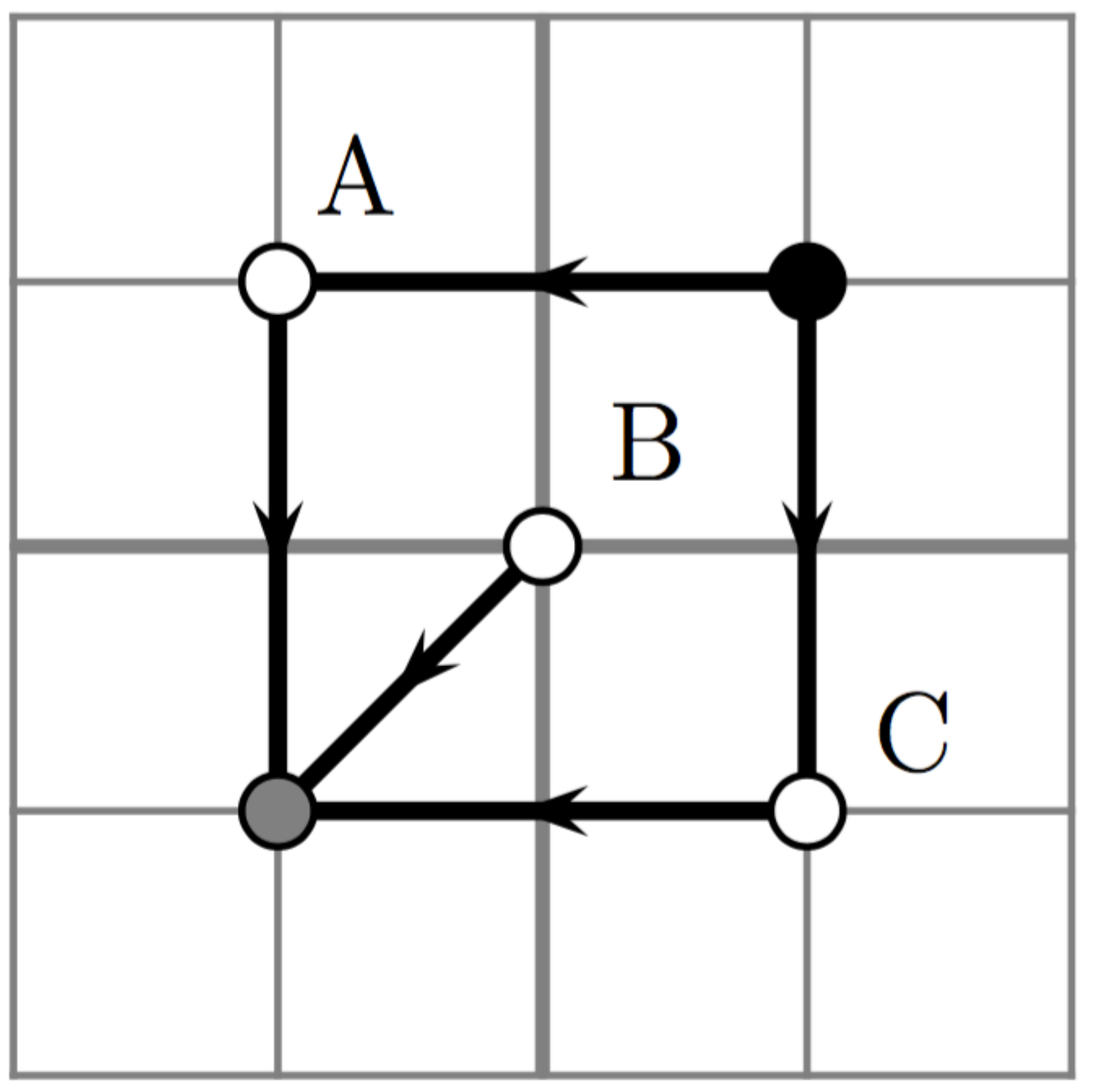} \fig{.2}{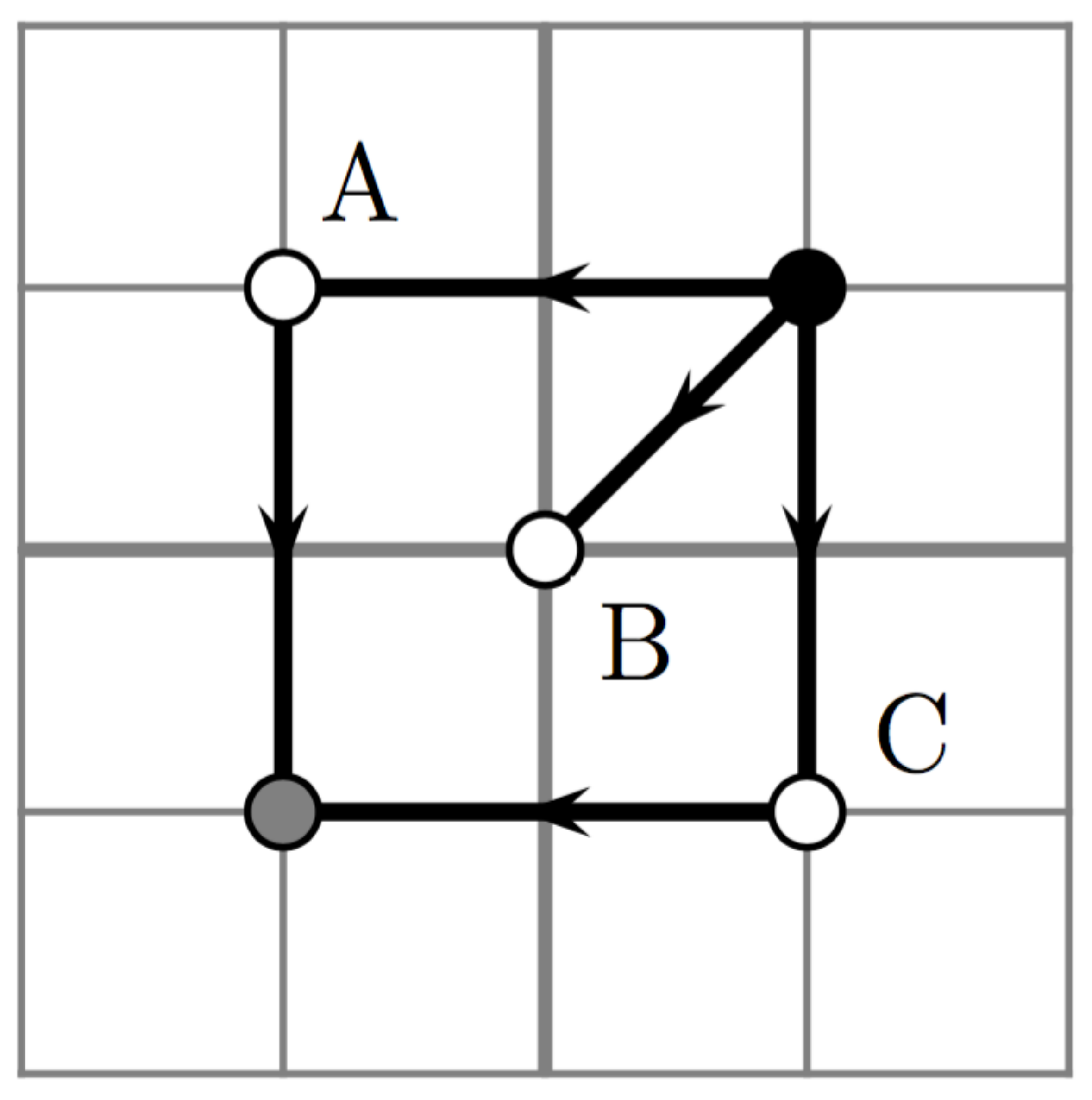}
\caption{ $\calk$--complexes $K_n$ and $-K_n$}
\label{figuresquare}
\end{figure}

For the complex $K_n$ one has $\U_{K_n}(t) = 0$ for all $t$.  
We focus on $\U^2_{K_n,1}(1)$, denoting this $\upsilon^2(K)$.  The following is an easy exercise.

\begin{theorem} $\upsilon^2(K_n) = -2n$ and $ \upsilon^2(-K_n) = 0$.
\end{theorem}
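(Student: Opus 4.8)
The plan is to compute $\upsilon^2(K_n)$ and $\upsilon^2(-K_n)$ directly from Definition~\ref{def:gamma2} and the definition of $\U^2$, working with the explicit $\calk$--complexes $K_n$ and $-K_n$ in Figure~\ref{figuresquare}. In both cases $\U_K(t)\equiv 0$, so $\gamma_K(1)=0$ and the support line $\call_{1,\gamma_K(1)}$ is the anti-diagonal through the origin; the central vertex $B$ (at the origin) is the only lattice point on it, so $p_1^- = p_1^+ = (0,0)$. Thus $\upsilon^2(K)=-2\gamma^2_{K,1}(1)$, and since at $t=1$ and $s=1$ the weight function $\tfrac{s}{2}j+(1-\tfrac{s}{2})i$ is just $\tfrac{i+j}{2}$, computing $\gamma^2$ amounts to finding the smallest $r$ so that some $z^-\in\calz^-$ and some $z^+\in\calz^+$ become homologous once we enlarge $\calf_{1,0}$ by the half-plane $i+j\le 2r$.

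First, for $K_n$: I identify $\calz^-$ and $\calz^+$. The two generating cycles of $H_0$ are $A+B$ and $B+C$, so one of these serves as $z^-$ (the cycle surviving into $\calf_{t^-,\gamma}$) and the other as $z^+$; their difference $A+C$ (recall $\F=\F_2$, so $A+B+B+C=A+C$) must be a boundary. From the complex one reads off the unique chain $D$ with $\partial D = A+C$, where $D$ is the upper-left or lower-right corner vertex of the square — its bifiltration level is $(n,n)$ (or, more precisely, the corner at distance $2n$ from $B$ along the square, giving coordinate sum $2n$). Hence the enlarged subcomplex must contain this corner vertex, forcing $2r \ge i+j = 2n$, i.e. $\gamma^2_{K_n,1}(1)=n$, and therefore $\upsilon^2(K_n)=-2n$. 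I will need to check that $A,B,C$ themselves already lie in $\calf_{1,0}$ (coordinate sum $\le 0$) so that no extra enlargement is needed to accommodate the cycles, only the bounding chain $D$; this follows because $A$ and $C$ are the midpoints of the sides adjacent to $B$, at coordinate sum $0$.

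Next, for $-K_n$: here the two generating cycles of $H_0$ are $A+C$ and $B$, which are already homologous via a chain supported at corner vertices on the side of the square with coordinate sum $-2n$ (the negative corners), i.e. inside $\calf_{1,0}$ already. So $z^- = z^+$ can be taken, or the relating chain lies in $\calf_{1,0}$ itself, giving $\gamma^2_{-K_n,1}(1) = \gamma_{-K_n}(1) = 0$ and $\upsilon^2(-K_n)=0$. I will double-check that $B$ (the origin) and $A,C$ all sit in $\calf_{1,0}$ and that the bounding chain for $A+C+B$ uses only the negatively-placed corners.

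The main obstacle is bookkeeping rather than conceptual: correctly reading the bifiltration coordinates of the corner vertices and the midpoints $A,C$ off Figure~\ref{figuresquare}, and confirming that the chain $D$ with $\partial D = A+C$ in $K_n$ is genuinely unique (so that $\gamma^2$ is not lowered by some alternative relation) — this uses that $H(\rmc')\cong\F$ and that the square complex has no other $1$--chain with that boundary. Once the coordinates $(n,n)$ for the relevant corner and $(0,0)$, and the membership facts about $\calf_{1,0}$, are pinned down, the rest is the arithmetic $-2\cdot n = -2n$ and $-2\cdot 0 = 0$.
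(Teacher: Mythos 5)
Your overall strategy (direct computation of $\gamma_K(1)$, $\calz^\pm$, and the minimal bounding chain) is the right one, and it is surely what the paper's ``easy exercise'' has in mind. For $K_n$ your computation is essentially correct: $z^-$ and $z^+$ are $A+B$ and $B+C$ (in some order), the unique chain $D$ with $\partial D = A+C$ sits at a lattice point with coordinate sum $2n$, and this forces $\gamma^2_{K_n,1}(1)=n$, hence $\upsilon^2(K_n)=-2n$. (A small geometric slip: $A$ and $C$ are not ``midpoints of sides adjacent to $B$''; $B$ is the central vertex and $A$, $C$ sit on the anti-diagonal through the origin, so all three have coordinate sum $0$ — which is the fact you actually need.)

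The argument for $-K_n$, however, has a genuine gap. You offer two alternatives, and neither is sound as stated. First, you say ``$z^-=z^+$ can be taken.'' But the paper explicitly stipulates, immediately after Definition~\ref{def:gamma2}, that if $\calz^-$ and $\calz^+$ are not disjoint then $\gamma^2_{K,t}(s)=-\infty$; so if $z^-=z^+$ were allowed, the conclusion would be $\upsilon^2(-K_n)=+\infty$, not $0$. Second, you say the ``relating chain lies in $\calf_{1,0}$ itself, giving $\gamma^2=\gamma=0$.'' Even granting that the chain is in $\calf_{1,0}$ (which you should check: dualizing $K_n$ tends to place the chain bounding $B+(A+C)$ at coordinate sum $+2n$, not $-2n$), this would \emph{not} give $\gamma^2=0$: if the bounding chain is already in $\calf_{1,0}$, then the homology relation holds in $\calf_{1,0}+\calf_{1,r}$ for \emph{every} $r$, including $r\to-\infty$, so the infimum in the definition of $\gamma^2$ is again $-\infty$. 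To obtain $\gamma^2_{-K_n,1}(1)=0$ you must show that $\calz^-$ and $\calz^+$ are disjoint, and that the \emph{least} $r$ for which some $z^-\in\calz^-$ and $z^+\in\calz^+$ become homologous in $\calf_{1,0}+\calf_{1,r}$ is exactly $r=0$. Your write-up never establishes this disjointness or pins down the minimizing $r$, so the $-K_n$ half of the theorem is not proved.
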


The linear independence  in concordance of these complexes follows quickly.

\begin{theorem} If $a_N >0$, then $\upsilon^2( \oplus_{n=1}^N a_nK_n) = -2N$.
\end{theorem}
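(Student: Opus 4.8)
The plan is to prove the two inequalities $\upsilon^2(\oplus_{n=1}^N a_n K_n)\ge -2N$ and $\upsilon^2(\oplus_{n=1}^N a_n K_n)\le -2N$ separately. Throughout write $\rmc=\oplus_{n=1}^N a_n K_n$, and note first that $\U_{K_n}\equiv 0$ for every $n$ and that $\U$ is a concordance homomorphism, so $\U_\rmc\equiv 0$; hence $\gamma_\rmc(1)=0$ and $\upsilon^2(\rmc)=\U^2_{\rmc,1}(1)=-2\gamma^2_{\rmc,1}(1)$. Thus it is equivalent to show $\gamma^2_{\rmc,1}(1)=N$.

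For the inequality $\gamma^2_{\rmc,1}(1)\le N$ (that is, $\upsilon^2(\rmc)\ge -2N$) I would invoke subadditivity. Write $\rmc$ as an iterated tensor product of $\sum_n|a_n|$ factors, each of which is a copy of $K_n$ when $a_n>0$ and a copy of $-K_n$ when $a_n<0$. Since every $\gamma(1)$ in sight is $0$, iterating the subadditivity theorem at $t=1$ gives that $\gamma^2_{\rmc,1}(1)$ is at most the maximum, over all factors, of $\gamma^2_{\cdot,1}(1)$; by the previous theorem this maximum is taken over the numbers $n$ with $a_n>0$ together with $0$ (if some $a_n<0$), and because $a_N>0$ it equals $N$.

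For the reverse inequality $\gamma^2_{\rmc,1}(1)\ge N$ (that is, $\upsilon^2(\rmc)\le -2N$) I would argue directly with the explicit model complexes, which is legitimate since $\U^2$ is a concordance invariant: one forms the tensor product and performs a change of basis as in the computation of $\cfk^\infty(T(4,5)\cs -T(2,3;2,5)\cs -T(2,5))$ above. Because $\U_\rmc$ has no singularity at $t=1$, the pivot points coincide, $p_1^-=p_1^+$. At $t=s=1$ the support lines have slope $-1$, so for a basis vertex $x$ the relevant weight is $\alg(x)+\alex(x)$, and this weight is additive under tensor products. The claim to establish is that every chain $c$ with $\partial c=z^-+z^+$, for $z^\pm$ in the pivot-cycle sets $\calz^\pm$ of $\rmc$, contains a basis vertex $x$ with $\alg(x)+\alex(x)\ge 2N$; this says exactly that $z^-$ and $z^+$ are not homologous in $H_0(\calf_{1,0}+\calf_{1,r})$ for $r<N$, i.e. $\gamma^2_{\rmc,1}(1)\ge N$. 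The mechanism is that the tensor factor $K_N$ forces such a chain to pass through the far corner of its box, which lies on the line $\alg+\alex=2N$ (this corner is precisely what produces $\gamma^2_{K_N,1}(1)=N$), while each smaller factor $K_n$ or $-K_n$ with $n<N$ is trivial at $t=1$ in the sense that $\gamma(1)=0$, and so its pivot cycles sit at weight at most $0$ and can provide no shortcut.

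The main obstacle is this last claim, that no chain relating $z^-$ to $z^+$ lies strictly below the line $\alg+\alex=2N$. Exhibiting one chain of weight exactly $2N$ is automatic from the subadditivity argument (take the telescoping product chain $c_1\otimes z_2^++z_1^-\otimes c_2$ from the proof of subadditivity), but ruling out all cheaper chains requires genuine control of the tensor product complex $\cfk^\infty(\rmc)$, in particular a verification that the smaller factors with $n<N$ cannot be used to cheapen the homology relation. I would carry this out either by completing the tensor-product-and-change-of-basis computation and checking, as in the worked examples, that the chain $c$ with $\partial c=z^-+z^+$ is essentially unique; or, more robustly, by constructing an explicit weight (a valuation on the product basis of $\rmc$) that evaluates to at least $2N$ on $z^-+z^+$ but to strictly less than $2N$ on every boundary of a chain supported in $\calf_{1,r}$ with $r<N$. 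The bookkeeping of the gradings and the two filtration levels under the iterated tensor product, together with the change of basis needed to expose the pivot cycles, is the part of the argument that requires the most care.
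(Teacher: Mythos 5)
Your easy direction, $\upsilon^2(\rmc)\ge -2N$, is exactly the paper's: factor $\rmc$ into copies of $K_n$ and $-K_n$, note $\gamma_\rmc(1)=0$, and iterate subadditivity at $t=s=1$. That part is fine.

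The hard direction, $\upsilon^2(\rmc)\le -2N$, is where you have a genuine gap, and you already flag it yourself: you want to show that every chain $c$ with $\partial c=z^-+z^+$ in the full tensor-product complex must contain a basis vertex on or above the line $\alg+\alex=2N$, and you observe that ruling out ``cheaper'' chains would require serious control of the iterated tensor product together with a change of basis. You sketch two possible routes (finish the computation, or build a valuation) but carry out neither, so as written the proposal does not prove the upper bound. The paper avoids this entirely by a short indirect argument that never touches the tensor-product complex. It argues by contradiction: if $\upsilon^2(\rmc)>-2N$, tensor $\rmc$ with the appropriate copies of $K_n$ and $-K_n$ for $n<N$ to cancel all small factors up to concordance, leaving $a_N K_N$; each of these extra factors satisfies $\upsilon^2(\pm K_n)\in\{-2n,0\}\ge -2(N-1)>-2N$, so subadditivity preserves the strict inequality and gives $\upsilon^2(a_N K_N)>-2N$. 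Then tensor with $a_N-1$ copies of $-K_N$ (which has $\upsilon^2=0$) to peel off the remaining copies, again preserving $>-2N$, concluding $\upsilon^2(K_N)>-2N$, contradicting $\upsilon^2(K_N)=-2N$. The key idea you were missing is that subadditivity plus concordance-inverse cancellation lets you reduce the lower bound on $\gamma^2$ to the single already-computed complex $K_N$, sidestepping any analysis of chains in the product complex. If you want to salvage your direct approach, it can in principle work, but you would have to actually produce the valuation or complete the change-of-basis calculation; the indirect route is substantially shorter.
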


\begin{proof} Write $K =   \oplus_{n=1}^N a_nK_n $.  By subadditivity,  $-2N \le \upsilon^2(K) \le 0$. If there is a strict inequality, $ -2N<   \upsilon^2(K)$, then tensoring with copies of $K_n$ or $-K_n$ for $n < N$ and applying subadditivity shows that $-2N < \upsilon^2(a_N K_N)$.  Now tensor with $-K_N$ (which has $\upsilon^2 = 0$) repeatedly (more precisely, $a_N - 1 $ times) and apply subadditivity to find that $-2N < \upsilon^2(  K_N)$, a contradiction.
\end{proof}

\section{Genus}
\label{sec-genus} Bounds on the three-genus of a knot $K$, and thus  its  concordance genus $g_c(K)$,   based on $\U_K(t)$ were presented in~\cite{livingston1, oss}.  These follow solely from the fact that for a knot of three-genus $g$, the  filtration levels of all filtered basis elements $x \in \cfki$ satisfy $|  \alg(x)- \alex(x)| \le g$.  (To be more precise, some complex representing the filtered chain homotopy class of $\cfki$ has this property.)   Thus, we have the following result corresponding to Theorem~10.1 of~\cite{livingston1}.

\begin{theorem} For every value of $t$ and for all   nonsingular points of $\U_{K,t}^{2}(s)$ as a function of $s$,
$|\Utwoprime (s)| \le g_c(K)$.
The jumps in $\Utwoprime(s)$ occur at rational numbers $\frac{p}{q}$.  For $p$ odd, $q\le g_c(K)$. If $p$ is even, $\frac{q}{2} \le g_c(K)$.
\end{theorem}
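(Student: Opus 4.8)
The plan is to reduce everything to the bound on bifiltration levels recalled just above and then to track the $s$-dependence of $\gamma^2_{K,t}(s)$ directly from Definition~\ref{def:gamma2}. Fix $t\in(0,2)$. Since $\U^2$ is a concordance invariant, replace $K$ by a knot $J$ concordant to $K$ with three-genus $g_3(J)=g_c(K)$, and work with a model for $\cfk^\infty(J)$ in which $|\alg(v)-\alex(v)|\le g_c(K)$ for every basis vertex $v$; this changes neither $\U^2_{K,t}$ nor the quantities below. The first step is to unwind the definition of $\gamma^2_{K,t}(s)$. Because $\calf_{t,\gamma_K(t)}+\calf_{s,r}$ is the subcomplex generated by the union of two subsets of the fixed basis $\calb$, the assertion that some $z^-_i\in\calz^-$ and $z^+_k\in\calz^+$ represent the same class in $H_0(\calf_{t,\gamma_K(t)}+\calf_{s,r})$ is exactly the existence of a chain $c$ with $\partial c=z^-_i-z^+_k$ every basis vertex $v$ of which either lies in $\calf_{t,\gamma_K(t)}$ or satisfies $(s/2)\alex(v)+(1-s/2)\alg(v)\le r$. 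Writing $\ell_s(v)=(s/2)\alex(v)+(1-s/2)\alg(v)$, this gives $\gamma^2_{K,t}(s)=\min_c\max_v \ell_s(v)$, where $c$ ranges over the (finitely many essential) such chains and $v$ over the basis vertices of $c$ lying outside $\calf_{t,\gamma_K(t)}$. Either this set of vertices is empty for some admissible $c$ — in which case $\gamma^2_{K,t}\equiv-\infty$, so $\U^2_{K,t}\equiv+\infty$ and the theorem is vacuous — or it is nonempty for every admissible $c$, and $\gamma^2_{K,t}$ is a genuine finite piecewise linear function of $s$.

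From here the derivative bound is immediate. Each $\ell_s(v)$ is affine in $s$ with slope $\tfrac12(\alex(v)-\alg(v))$, so at any nonsingular point the slope of $\gamma^2_{K,t}$ equals $\tfrac12(\alex(v)-\alg(v))$ for whichever vertex $v$ realizes the min--max there. Since $\U^2_{K,t}(s)=-2\big(\gamma^2_{K,t}(s)-\gamma_K(t)\big)$, this yields $\Utwoprime(s)=-(\alex(v)-\alg(v))$, and $|\Utwoprime(s)|\le g_c(K)$ by the genus bound.

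For the jump locations, a breakpoint $s_0$ of a minimum of maxima of the affine functions $\{\ell_s(v)\}$ can occur only where the active affine function changes, say from $\ell_s(v)$ to $\ell_s(v')$; this forces $\ell_{s_0}(v)=\ell_{s_0}(v')$ with $v,v'$ occupying distinct bifiltration levels $(i_1,j_1)\ne(i_2,j_2)$, since otherwise the two affine functions coincide and there is no change of slope. With $a=\alex(v)-\alg(v)=j_1-i_1$ and $b=\alex(v')-\alg(v')=j_2-i_2$, so $|a|,|b|\le g_c(K)$, solving $\ell_{s_0}(v)=\ell_{s_0}(v')$ gives $s_0=2(i_2-i_1)/(a-b)$; for an interior breakpoint one checks $i_1\ne i_2$ and $a\ne b$, so $s_0$ is a well-defined nonzero rational. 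Write $s_0=p/q$ in lowest terms, so $p(a-b)=2q(i_2-i_1)$. If $p$ is odd, evenness of the right-hand side forces $a-b$ to be even; then $p\cdot\tfrac{a-b}{2}=q(i_2-i_1)$ with $\gcd(p,q)=1$ gives $q\mid\tfrac{a-b}{2}$, hence $q\le\tfrac{|a-b|}{2}\le g_c(K)$. If $p$ is even, then $q$ is odd, and $q\mid p(a-b)$ with $\gcd(p,q)=1$ gives $q\mid(a-b)$, hence $q\le|a-b|\le 2g_c(K)$, i.e.\ $\tfrac{q}{2}\le g_c(K)$.

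I expect the only real obstacle to be the bookkeeping behind the phrase ``finitely many essential chains'': one must verify that in computing $\gamma^2_{K,t}(s)$ it suffices to minimize over a finite set of chains $c$, so that $\gamma^2_{K,t}$ is genuinely piecewise linear with finitely many breakpoints — this follows because the optimal $c$ can always be taken supported in a bounded region of $\cfki$. The parity analysis in the last paragraph is elementary, but it is the one place where care is needed to keep straight exactly when the factor of $2$ in the numerator of $s_0$ survives passage to lowest terms.
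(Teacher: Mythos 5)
Your proof is correct and follows the same approach the paper intends: the paper gives no separate proof, merely noting that the bound follows (as in Theorem~10.1 of~\cite{livingston1}) from the fact that a genus-minimizing representative of the concordance class has $|\alg(x)-\alex(x)|\le g_c(K)$ for all filtered basis elements. You have simply supplied the details the paper leaves implicit, namely expressing $\gamma^2_{K,t}(s)$ as a min--max of the affine functions $\ell_s(v)$ over lattice points outside $\calf_{t,\gamma_K(t)}$, reading off slopes at nonsingular points, and doing the elementary parity computation at breakpoints; this unwinding is accurate (including the tacit point that for $\delta$ small enough the cycles $z^\pm$ already lie in $\calf_{t,\gamma_K(t)}$, so only the connecting chain $c$ contributes to the max).
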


The complex $\cfk^\infty(T(3,4))$ illustrated in Figure~\ref{fig34-labeled} is called a \emph{stairway complex} of type $[1,2,2,1]$.  To set notation, its grading 0 generators will be denoted $\{a_1, a_2, a_3\} $ and its grading 1 generators $\{b_1, b_2\}$.  The $a_i$ represent a generator of homology and are all homologous.

The complex for $\cfk^\infty(-T(3,4))$ is denoted $-[1,2,2,1]$. Its grading 0 generators are now still $\{a_1, a_2, a_3\}$ but the set $\{b_1, b_2\}$ represent grading $-1$ generators.  The set of 0--cycles is one dimensional, generated by $a_1 + a_2 + a_3$. 

As illustrated in Figure~\ref{fig:ladders},  $\cfk^\infty(T(3,4) \cs - T(2,3;4,5))$ and  $-\cfk^\infty(T(2,5))$ are, modulo acyclic complexes, both given by stairway complexes, $[2,2]$ and $-[1,1,1,1]$.  

\begin{example}
We now consider the connected sum of $n$ copies of $$K =  T(3,4) \cs - T(2,3;4,5) \cs -T(2,5),$$ denoted $nK$.  We  show $g_c(nK) \ge 4n-2$. Modulo acyclic complexes, $n( T(3,4) \cs - T(2,3;4,5))$ is represented by the complex $\rmc_1 = [2,2, \ldots, 2]$ with a total of $2n$ consecutive entries $2$.  Similarly, for the knot $-nT(2,5)$, the complex is $\rmc_2 =  -[1,1,\ldots, 1]$ with $4n$ consecutive entries 1.   (For details, see~\cite[Appendix B]{hkl} or~\cite{hhn}.)  

We will denote the basis elements for  $\rmc_1$ by $A_i$ and $B_i$, and those for $\rmc_2$ by $a_i $ and $b_i$.  In the tensor product of these two complexes, there are only two types for which $\alg(\cdot) + \alex(\cdot) \le 0$. The first are of the form $b_i \otimes A_j$, with $\alg(\cdot) + \alex(\cdot) =-1$ and grading $-1$. The second are of the type $a_i \otimes A_i$ for which $\alg(\cdot) + \alex(\cdot) =0$ and grading $0$.  

Since $\partial A_i = 0$, it is easily seen that the only grading 0 cycles for which  $\alg(\cdot) + \alex(\cdot) \le 0$ are linear combinations of $(\sum_{i=1}^{2n+1}a_i) \otimes A_j$.  It then follows that $\calz^\pm$ each contain one element:  $z^- = (\sum_{i=1}^{2n+1}a_i) \otimes A_1$ and  $z^+ = (\sum_{i=1}^{2n+1}a_i) \otimes A_{n+1}$.

Suppose that $\partial w = z^- - z^+$.  Then clearly when $w$ is expressed in terms of the given basis, both $a_1 \otimes B_1$ and $a_{2n+1} \otimes B_n$ appear.  These are at filtration levels $(-2n +2, 2n)$ and $(2n, -2n+2)$, respectively.  

From these calculations, we have the following.

$$\U^2_{nK,1}(s) =
\begin{cases}
(4n-2)s -4n &\text{if\ } s \le 1\\
(-4n+2)s +4n -4&\text{if\ } s \ge 1.\\
\end{cases}
$$
As a corollary, we have
$g_c(nK) \ge 4n-2$.

Note that with care one can further show $g_c(nK)\ge 4n$ since the boundary of $a_1 \otimes B_1$ and $a_{2n+1} \otimes B_n$ must appear in the complex.  In~\cite{hom-epsilon-upsilon} Hom asked whether  another bound on the concordance genus, denoted there by $\gamma$, equals $4n$ for these knots. 

\end{example}



\begin{thebibliography}{AAA}

\bibitem{borodzik-hedden} M.~Borodzik and M.~Hedden, {\em The Upsilon function of L-space knots is a Legendre transformation},  \url{arxiv.org/abs/1505.06672}.

\bibitem{borodzik-livingston} M.~Borodzik and C.~Livingston, {\em Semigroups, d--invariants and deformations of cuspidal singular points of plane curves},   J.~Lond. Math.~Soc.~(2) {\bf 93} (2016),   439--463.

\bibitem{feller-krcatovich} P.~Feller and D.~Krcatovich, {\em On cobordisms between knots, braid index, and the Upsilon-invariant},  \url{arxiv.org/abs/1602.02637}.

\bibitem{feller-park-ray} P.~Feller, J.~Park, and A.~Ray, {\em On the Upsilon invariant and satellite knots,}   \url{arxiv.org/abs/1604.04901}.

\bibitem{hkl}  M.~Hedden, S.-G.~Kim, and C.~Livingston, {\em Topologically slice knots of smooth concordance order two}, Jour. Diff. Geom. {\bf 102} (2016), 353--393.

\bibitem{hhn} S.~Hancock, J.~Hom, and M.~Newman, {\em On the knot Floer filtration of the concordance group}, J. Knot Theory Ramifications {\bf 22} (2013) 1350084, 30 pp.

\bibitem{hendricks-manolescu} K.~Hendricks and C.~Manolescu, {\em Involutive Heegaard Floer homology}, \url{arxiv.org/abs/1507.00383}.

\bibitem{hom1} J.~Hom, {\em The knot Floer complex and the smooth concordance group},  Comment. Math. Helv. {\bf 89} (2014),  537--570. 

\bibitem{hom-survey} J.~Hom, {\em A survey on Heegaard Floer homology and concordance}, \url{arxiv.org/abs/1512.00383}.

\bibitem{hom-epsilon-upsilon} J.~Hom, {\em A note on the concordance invariants epsilon and upsilon}, Proc. Amer. Math. Soc. {\bf 144} (2016), 897--902.

\bibitem{kim-wu} M.~Kim and Z.~Wu, {\em On rational sliceness of Miyazaki's fibered, $-$amphicheiral knots,} 
\url{arxiv.org/abs/1604.04870}.

\bibitem{livingston1} C.~Livingston, {\em Notes on the knot concordance invariant Upsilon}, \url{arxiv.org/abs/1412.0254}.

\bibitem{livingston-cott} C.~Livingston and C.~Van Cott, {\em The four-genus of connected sums of torus knots,} \url{arxiv.org/abs/1508.01455}.

\bibitem{oss} P.~Ozsv{\'a}th, A.~Stipsicz, and Z.~Szab{\'o}, {\em Concordance homomorphisms from knot Floer homology}, \url{arxiv.org/abs/1407.1795}.

\bibitem{oss2} P.~Ozsv{\'a}th, A.~Stipsicz, and Z.~Szab{\'o}, {\em Unoriented knot Floer homology and the unoriented four-ball genus}, \url{arxiv.org/abs/1508.03243}.

\bibitem{wang} S.~Wang, {\em Semigroups of L-space knots and nonalgebraic iterated torus knots,}  \url{arxiv.org/abs/1603.08877}.
\end{thebibliography}
\end{document}